\documentclass[11pt,a4paper]{amsart}
\usepackage{lscape}%para girar el texto de la pgina
\usepackage{hyperref} %%%% Incluye los links en el dvi
\usepackage{multirow}
\usepackage{amsmath,amsthm, amscd, amssymb, amsfonts}
\usepackage{epsfig}%,youngtab}
\usepackage{amsmath,amsthm,amssymb, amscd,enumerate}

\usepackage{color}
%\usepackage{pdfsync}
%\synctex=1

%%%%%%%%%%%%%%%%%%%estaban%%%%%%%%%%%%%%%%%%%%%%%%%%%%%%%%%%%%

\numberwithin{equation}{section}
\theoremstyle{plain}
\newtheorem{theorem} {Theorem} [section]
\newtheorem*{theoremSN} {Theorem}

\newtheorem{corollary} [theorem] {Corollary}
\newtheorem{proposition} [theorem] {Proposition}

\theoremstyle{definition}

\newtheorem{definition}[theorem]{Definition}
\newtheorem{remark}[theorem]{Remark}

\renewcommand \parallel {/\kern-3pt/}
\newcommand \N {\mathbb N}
\newcommand \n {\mathfrak{n}}
\renewcommand \k {\mathbb{K}}
\newcommand \End {\operatorname{End}}

\newcommand \im {\operatorname{Im}}

%%%%%%%%%%%%%%%%%%%%%%%%%%%%%%%%%%%%%%%%%%%%%%%%%%%%%%%%%%%%%%%%%%%%%%%%%%%%%%%

\begin{document}

%%%%%%%%%%%%%%%%%%%%%%%%%%%%%%%%%%%%%%%%%%%%%%%%%%%%%%%%%%%%%%%%%%%%%%%%%%%%%%%%%%%%%%%%%%%%%%%%%%

\newpage

\title[Minimal faith. repns of the free $2$-step nilp. Lie algebra]{Minimal faithful representations of the free 2-step nilpotent Lie algebra of the rank $r$}

%    Information for first author

\author[Cagliero]{Leandro Cagliero$^\dagger$}
%\thanks{$^\dagger$Partially supported by CONICET, FONCyT and SECyT-UNC Grants (Argentina)}
\address{\noindent $^\dagger$FaMAF-CIEM (CONICET), Universidad Nacional de C\'ordoba,
Medina A\-llen\-de s/n, Ciudad Universitaria, 5000 C\' ordoba, Rep\'
ublica Argentina.}
\email{cagliero@famaf.unc.edu.ar}

%    Information for second author

\author[Rojas]{Nadina Rojas$^\ddagger$}
\thanks{Partially supported by \textsc{Conicet} PIP 112-2013-01-00511CO, \textsc{SeCyT-UNC} 33620180100983CB, MinCyT Córdoba, FONCYT Pict2013 1391, \textsc{FaCEFyN UNC}}
\address{\noindent $^\ddagger$FaCEFyN-CIEM (CONICET), Universidad Nacional de C\'ordoba,
Medina A\-llen\-de s/n, Ciudad Universitaria, 5000 C\' ordoba, Rep\'
ublica Argentina.}
\email{nadina.rojas@unc.edu.ar}

% General info

\keywords{Ado's Theorem, Minimal Representation, Free Lie algebra.}

\subjclass[2010]{17B01, 17B30, 22E27, 20C40}

\begin{abstract}
Given a finite dimensional Lie algebra $\mathfrak{g}$, let
$\mathfrak{z}(\mathfrak{g})$ denote the center of
$\mathfrak{g}$ and let $\mu(\mathfrak{g})$
be the minimal possible dimension for a
faithful representation of $\mathfrak{g}$.
In this paper we obtain $\mu(\mathcal{L}_{r,2})$, where
 $\mathcal{L}_{r,k}$ is the free
$k$-step nilpotent Lie algebra of rank $r$.
In particular we prove that $\mu(\mathcal{L}_{r,2})=
\left\lceil \sqrt{2r(r-1)}  \right\rceil + 2$
for $r \geq 4$.
It turns out that $\mu(\mathcal{L}_{r,2})
\sim\mu\big(\mathfrak{z}(\mathcal{L}_{r,2})\big)
\sim 2\sqrt{\dim\mathcal{L}_{r,2}} $ (as $r\to\infty$) and we present
some evidence that this could be true for  $\mathcal{L}_{r,k}$
for any $k$, this is considerably lower than the known bounds
for $\mu(\mathcal{L}_{r,k})$, which are (for fixed $k$) polynomial
in $\dim\mathcal{L}_{r,k}$.
\end{abstract}

\maketitle

%====================================================================================

\section{Introduction and main results}\label{intro}

%====================================================================================

We fix throughout a field $\mathbb{K}$ of characteristic zero
and all vector spaces considered in this paper are assumed to be finite
dimensional over $\mathbb{K}$.

Ado's Theorem states that for any Lie algebra $\mathfrak{g}$
there exists a faithful (finite dimensional) representation of
$\mathfrak{g}$.
Even though there are different proofs of Ado's Theorem
(see for instance \cite{Bu1,dG,KB,Z}), they  do not usually
yield
faithful representations having low dimension compared to
$\dim\mathfrak{g}$, and obtaining efficient algorithms
for producing faithful representations of small dimension
is an active field of research, see for instance \cite{BEdG, dG}.

The situation is similar for polycyclic groups, and in particular for
finitely generated torsion-free nilpotent groups ($\tau$-groups): the works of Auslander \cite{A}, and Jennings for $\tau$-groups \cite{J},  show that these groups can be embedded into
some group of matrices over the integers, but as
in the case of Lie algebras, it is difficult to provide embeddings
of low dimension (compared to the Hirsch length of the given group).
Obtaining algorithms towards this end is an
important problem (see \cite{dGN,LO, N}) and, in fact, the interest in low dimensional faithful representations also applies
to other type of groups and algebras (see for instance \cite{Ba, LC, W}).
This problem for $\tau$-groups is very closely related to that
for nilpotent
Lie algebras, mainly by the exp and log maps,
and many ideas are borrowed from
each other (see \cite{dGN}).
A classical reference for this is the book of Segal \cite{S}.

The interest in low dimensional faithful representations has many
motivations. For instance, there are very few classes of groups for which the isomorphism problem is known to be solvable
and it is acknowledged as a remarkable case the solution obtained by
Grunewald and Segal \cite{GS} for the class of $\tau$-groups, which depend of having faithful representations of the groups.
It is clear here the importance of having algorithms
that provide low dimensional faithful representations.
For Lie algebras, in addition to the connection already mentioned with
groups, Milnor and Auslander related the problem of determining
whether a given group is the fundamental group of a compact complete affinely-flat manifold with that of finding Lie algebras $\mathfrak{g}$ admitting
faithful representations of dimension less than or equal to
$\dim\mathfrak{g}$. These Lie algebras, in turn, yield Lie groups admitting a left-invariant affine structures. Many details about this can be found in \cite{Bu}.

This leads to consider, for a given Lie algebra $\mathfrak{g}$, the invariant
\[
\mu(\mathfrak{g})= \min \{ \dim V : (\pi, V) \text{ is a faithful representation of } \mathfrak{g}\}
\]
which is, in general, very difficult to compute.
Only for a few
families of Lie algebra the value of $\mu$ is known, for example:
semisimple \cite{BM1},
nilpotent of dimension less than or equal to 6 \cite{Ro2},
direct sum of abelian plus Heisenberg Lie algebras \cite{Ro1, S},
current Heisenberg Lie algebras \cite{CR1}.

While it is known  that
$\mu(\mathfrak{g})\le \binom{c+\dim\mathfrak{g}}{c}$ for a $c$-step nilpotent Lie algebra $\mathfrak{g}$ (see \cite{dG}), as far as we know all the evidence indicates
that $\mu(\mathfrak{g})\le K\dim \mathfrak{g}$ for some constant $K$:
to the best of our knowledge, it is not known whether
there is a family of Lie algebras
$\mathfrak{g}_n$ such that $\dim\mathfrak{g}_n\to\infty$ and
$\mu(\mathfrak{g}_n)=O(\dim\mathfrak{g}^c)$, $c>1$;
and
it is not known whether $\mu(\mathfrak{g})$ is bounded above by a polynomial in $\dim\mathfrak{g}$.

\medskip

In this paper we consider the free
$2$-step nilpotent Lie algebra of rank $r$
$\mathcal{L}_{r,2}=\mathbb{K}^r\oplus \bigwedge^2 \k^r$ and we prove the following theorem.

\begin{theoremSN}
Let
$\mathcal{L}_{r,2}$ be the free $2$-step nilpotent Lie algebra of rank
$r$. Then
\[
 \mu(\mathcal{L}_{r,2}) =
 \begin{cases}
  \left\lceil \sqrt{2r(r-1)}  \right\rceil + 2,
  & \text{if $r \geq 4$;} \\[4mm]
  2r-1,
  & \text{if $r=2, 3, 4, 5$.}
 \end{cases}
\]
\end{theoremSN}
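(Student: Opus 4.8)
Throughout put $q=\left\lceil \sqrt{2r(r-1)}\,\right\rceil$, so the value to be proved for $r\ge4$ is $q+2$; note that $q+2=2r-1$ exactly when $r=4,5$, while $r=2,3$ are handled separately at the end. The plan is to reduce the computation of $\mu(\mathcal{L}_{r,2})$ to a linear–algebra optimisation. Since $\mathfrak{z}(\mathcal{L}_{r,2})=\bigwedge^2\k^r=[\mathcal{L}_{r,2},\mathcal{L}_{r,2}]$ for $r\ge3$, in every finite–dimensional representation the central elements $e_i\wedge e_j$ act by commuting nilpotent operators (Lie's theorem on the triangularised image). One then works with representations adapted to a three–step flag $V=V_0\oplus V_1\oplus V_2$ of the form $e_i\mapsto A_i+B_i$ with $A_i\in\Hom(V_1,V_0)$ and $B_i\in\Hom(V_2,V_1)$; here $e_i\wedge e_j$ goes to $A_iB_j-A_jB_i\in\Hom(V_2,V_0)$, and such a representation is faithful iff (i) the $r$ pairs $(A_i,B_i)$ are linearly independent and (ii) the $\binom r2$ operators $A_iB_j-A_jB_i$ ($i<j$) are linearly independent. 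Granting that $\mu(\mathcal{L}_{r,2})$ equals the minimum of $\dim V_0+\dim V_1+\dim V_2$ over all such configurations — the delicate point, see below — the theorem becomes a counting problem.

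\emph{Upper bound ($r\ge4$).} Take $\dim V_1=2$ and $\dim V_0=a$, $\dim V_2=b$ with $a+b=q$ and $ab=\lfloor q/2\rfloor\lceil q/2\rceil$. The elementary inequality $\lfloor q^2/4\rfloor\ge\binom r2$ — which is exactly what pins down $q$ — gives $ab\ge\binom r2$, so $\Hom(V_2,V_0)$ has room for $\binom r2$ independent operators. It remains to choose $a\times2$ matrices $A_i$ and $2\times b$ matrices $B_i$ so that the $\binom r2$ matrices $A_iB_j-A_jB_i$ are linearly independent ((i) being easy to secure at the same time). Splitting $A_i$ into columns $u_i,u_i'\in\k^a$ and $B_i$ into rows $w_i^{\mathsf T},{w_i'}^{\mathsf T}$ (with $w_i,w_i'\in\k^b$), this operator is $u_iw_j^{\mathsf T}-u_jw_i^{\mathsf T}+u_i'{w_j'}^{\mathsf T}-u_j'{w_i'}^{\mathsf T}$, and I would make the required linear independence explicit by a combinatorial assignment that places each pair $\{i,j\}$ into its own cell of the $a\times b$ grid, using the second coordinate of $V_1\cong\k^2$ to cancel the unavoidable overlaps produced by a single coordinate. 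This gives a faithful representation of dimension $a+2+b=q+2$, hence $\mu(\mathcal{L}_{r,2})\le q+2$.

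\emph{Lower bound.} One must show $\dim V_0+\dim V_1+\dim V_2\ge q+2$. If $\dim V_1\ge2$: by (ii) the $\binom r2$ independent operators lie in $\Hom(V_2,V_0)$, so $(\dim V_0)(\dim V_2)\ge\binom r2$; hence $\dim V_0+\dim V_2$ is an integer $\ge\sqrt{2r(r-1)}$, so $\ge q$, and the total is $\ge q+\dim V_1\ge q+2$. If $\dim V_1=1$: then $A_iB_j-A_jB_i=u_iw_j^{\mathsf T}-u_jw_i^{\mathsf T}$, and the map $\bigwedge^2\k^r\to\Hom(V_2,V_0)$, $x\wedge y\mapsto U(x)W(y)^{\mathsf T}-U(y)W(x)^{\mathsf T}$ (with $U\colon e_i\mapsto u_i$, $W\colon e_i\mapsto w_i$) vanishes on $\bigwedge^2\ker U$ and on $\bigwedge^2\ker W$, so (ii) forces $\dim\ker U\le1$ and $\dim\ker W\le1$, i.e. $\dim V_0\ge r-1$ and $\dim V_2\ge r-1$; the total is then $\ge2r-1$, which is $\ge q+2$ precisely for $r\ge4$ (equivalently $2r^2-10r+9\ge0$). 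The case $\dim V_1=0$ makes all brackets vanish. The genuine obstacle — and the structural core of the lower bound — is justifying the reduction of the first paragraph for an \emph{arbitrary} faithful representation: triangularising by Lie's theorem leaves the $\pi(e_i)$ upper triangular, not strictly, and one must show that a minimal faithful representation can be taken with nilpotent image and with a block structure fitting a three–step flag, so that the pair-of-maps description and conditions (i)–(ii) become legitimate.

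\emph{Small $r$.} For $r=2$, $\mathcal{L}_{2,2}$ is the $3$–dimensional Heisenberg algebra and $\mu=3=2\cdot2-1$ is classical. For $r=3$, $\dim\mathcal{L}_{3,2}=6$, so $\mu(\mathcal{L}_{3,2})=5=2\cdot3-1$ follows from the known classification of $\mu$ for nilpotent Lie algebras of dimension $\le6$; alternatively the flag analysis gives $\mu\ge5$ ($\dim V_1=1$ forces total $\ge5$, while $\dim V_1\ge2$ forces total $\ge\lceil2\sqrt3\,\rceil+2=6$) together with an explicit $5$–dimensional faithful representation on a flag with $\dim V_0=\dim V_2=2$, $\dim V_1=1$. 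Since $q+2=2r-1$ for $r=4,5$, those values are covered by the case $r\ge4$.
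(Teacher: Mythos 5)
There are two genuine gaps here, one in each direction of the theorem. For the lower bound, your whole argument is conditional on the reduction you yourself flag as ``the genuine obstacle'': that a faithful representation of minimal dimension may be assumed to be a nilrepresentation adapted to a three-step flag $V_0\oplus V_1\oplus V_2$, with the generators acting only through $\Hom(V_1,V_0)\oplus\Hom(V_2,V_1)$ and the center only through $\Hom(V_2,V_0)$. Lie's theorem does give that the image of $[\mathcal{L}_{r,2},\mathcal{L}_{r,2}]$ consists of nilpotent operators, and the passage to nilrepresentations ($\mu_{nil}=\mu$ when $\mathfrak{z}\subset[\mathfrak{g},\mathfrak{g}]$) is a quoted nontrivial result of \cite{CR1}; but neither of these produces the flag, and nothing you say shows a minimal representation has that block form. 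Your case analysis on $\dim V_1$ correctly bounds representations that are already of type $(a,p,b)$ --- this is essentially the paper's Proposition 2.4 --- but the hard content of the lower bound is handling an \emph{arbitrary} faithful nilrepresentation. The paper does this without any flag reduction: it applies the decomposition theorem of \cite{CR2} (Theorem 3.1) to the chain $\pi(\mathfrak{z})\subset\pi(\mathcal{L}_{r,2})$, obtains subspaces $\mathfrak{n}_{k,j}$ and vectors $v_j$, and in the generic case extracts $\dim V\ge\dim\mathfrak{n}_{2,1}+s_2+2\ge\left\lceil\sqrt{2r(r-1)}\right\rceil+2$ by a counting argument, while the boundary case is handled by an induction that produces a faithful type-$(\ast,1,\ast)$ representation of $\mathcal{L}_{r-1,2}$, plus a separate, quite laborious matrix computation to settle $r=4,5$ (where $2r-2$ must be pushed up to $2r-1$). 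None of this is replaced by your sketch, so the inequality $\mu\ge q+2$ is not established.

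For the upper bound, the construction is asserted rather than given. The claim that one can ``place each pair $\{i,j\}$ into its own cell of the $a\times b$ grid, using the second coordinate of $\mathbb{K}^2$ to cancel the overlaps'' is exactly the statement requiring proof: one needs $A_i\in M_{a,2}$, $B_i\in M_{2,b}$ such that all $\binom{r}{2}$ matrices $A_iB_j-A_jB_i$ are linearly independent inside $M_{a,b}$, whose dimension $ab$ can equal $\binom{r}{2}$ exactly, so there is no slack to absorb accidental relations. This existence statement is the core of the paper's Section 4: it is proved by a delicate induction on $ab$ showing that the Zariski-open sets $\mathcal{S}_{a,b}$ of admissible matrix sequences are nonempty (Theorem 4.3), followed by a scaling/genericity argument (Theorem 4.4); the authors explicitly remark that no explicit formula is known, which is a strong hint that a casual combinatorial assignment will not be routine to write down or verify. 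Until you either carry out such a scheme in detail or prove a genericity statement of the paper's kind, the bound $\mu(\mathcal{L}_{r,2})\le\left\lceil\sqrt{2r(r-1)}\right\rceil+2$ is unproved. Your treatment of $r=2,3$ agrees with the paper and is fine.
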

That is
\begin{center}
\begin{tabular}{rccccccccccccccc}
 $r$: & 2 & 3 & 4 & 5 & 6 & 7 & 8 & 9 & 10 & 11 & 12 & 13 & 14 & 15 & 16  \\[1mm]
 $\mu(\mathcal{L}_{r,2})$: & 3 & 5 & 7 & 9 & 10 & 12 & 13 & 14 & 16 & 17 & 19 & 20 & 22 & 23 & 24
\end{tabular}
\end{center}

\medskip

This theorem says that
\begin{align*}
\mu(\mathcal{L}_{r,2})&\sim \sqrt{2}\,r \\
&\sim 2\sqrt{\dim\mathcal{L}_{r,2}} \\
& \sim \mu(\mathfrak{z}(\mathcal{L}_{r,2})),
\end{align*}
where $\mathfrak{z}(\mathcal{L}_{r,2})=\bigwedge^2 \k^r$ is the center of $\mathcal{L}_{r,2}$.

The value $\mu(\mathcal{L}_{r,2})$ is surprisingly low for us, note that it follows at once that
$\mu(\mathcal{L}_{r,2})\ge \mu(\mathfrak{z}(\mathcal{L}_{r,2}))=
\left\lceil\sqrt{2r(r-1)-1}\right\rceil$
(any faithful representation of an abelian Lie algebra of dimension $n$ has dimension greater than or equal to $\lceil2\sqrt{n-1}\rceil$, see \cite{S}).
Consequently, it was a very hard task for us proving that there actually existed
faithful representations having the necessary dimension.

\subsection{Some words about the proof and some evidence about higher nilpotency degrees}
It is natural to look for faithful matrix representations
\[
 \pi:\mathcal{L}_{r,2}\to\mathfrak{gl}(a+p+b,\k),
\]
for some $a,p,b\in\N$, so that
\begin{equation}\label{eq.cond1}
\footnotesize
  \setlength{\unitlength}{8pt}
\begin{picture}(14,10)(2,-1)
\thicklines
\put(-0.5,4){\normalsize$\pi(\mathcal{L}_{r,2})\subset$}
\put(5.5,0){\line(0,1){8}}
\put(15.5,0){\line(0,1){8}}
\put(15.3,8){\line(1,0){.2}}
\put(5.5,8){\line(1,0){.2}}
\put(5.5,0){\line(1,0){.2}}
\put(15.3,0){\line(1,0){.2}}

\linethickness{0.1mm}
\multiput(11.5,0)(0,.2){40}{\line(0,1){.1}}
\multiput(9.5,0)(0,.2){40}{\line(0,1){.1}}
\multiput(5.5,3)(.2,0){50}{\line(1,0){.1}}
\multiput(5.5,5)(.2,0){50}{\line(1,0){.1}}

\put(13.2,3.7){$*$}
\put(13.2,6.35){$*$}
\put(10.2,6.35){$*$}
\put(7.2,6.35){$ 0 $}
\put(7.2,3.7){$0$}
\put(10.2,3.7){$0$}
\put(7.2,1.2){$ 0 $}
\put(10.2,1.2){$0$}
\put(13.2,1.2){$0$}
\put(11.85,0){$\underbrace{\rule{28pt}{0pt}}_\text{\small{$b$}}$}
\put(9.6,0){$\underbrace{\rule{10pt}{0pt}}_\text{\small{$p$}}$}
\put(5.7,0){$\underbrace{\rule{30pt}{0pt}}_\text{\small{$a$}}$}

\put(15.35,6.25){$\left.\rule{0mm}{4mm}\right\}\text{{$a$}}$}
\put(15.35,3.75){$\left.\rule{0mm}{2mm}\right\}\text{{$p$}}\;\;,$}
\put(15.35,1.25){$\left.\rule{0mm}{4mm}\right\}\text{{$b$}}$}
\end{picture}
\qquad\qquad\qquad
\begin{picture}(14,10)(2,-1)
\thicklines
\put(-1.9,4){\normalsize$\pi( \bigwedge^2 \k^r)\subset$}
\put(5.5,0){\line(0,1){8}}
\put(15.5,0){\line(0,1){8}}
\put(15.3,8){\line(1,0){.2}}
\put(5.5,8){\line(1,0){.2}}
\put(5.5,0){\line(1,0){.2}}
\put(15.3,0){\line(1,0){.2}}

\linethickness{0.1mm}
\multiput(11.5,0)(0,.2){40}{\line(0,1){.1}}
\multiput(9.5,0)(0,.2){40}{\line(0,1){.1}}
\multiput(5.5,3)(.2,0){50}{\line(1,0){.1}}
\multiput(5.5,5)(.2,0){50}{\line(1,0){.1}}

\put(13.2,3.7){$0$}
\put(13.2,6.35){$*$}
\put(10.2,6.35){$0$}
\put(7.2,6.35){$ 0 $}
\put(7.2,3.7){$0$}
\put(10.2,3.7){$0$}
\put(7.2,1.2){$ 0 $}
\put(10.2,1.2){$0$}
\put(13.2,1.2){$0$}
\put(11.85,0){$\underbrace{\rule{28pt}{0pt}}_\text{\small{$b$}}$}
\put(9.6,0){$\underbrace{\rule{10pt}{0pt}}_\text{\small{$p$}}$}
\put(5.7,0){$\underbrace{\rule{30pt}{0pt}}_\text{\small{$a$}}$}

\put(15.35,6.25){$\left.\rule{0mm}{4mm}\right\}\text{{$a$}}$}
\put(15.35,3.75){$\left.\rule{0mm}{2mm}\right\}\text{{$p$}}\;.$}
\put(15.35,1.25){$\left.\rule{0mm}{4mm}\right\}\text{{$b$}}$}
\end{picture}
\end{equation}

\medskip

\noindent
In this case, it is necessary that $ab\ge r(r-1)/2= \dim  \bigwedge^2 \k^r$.
We prove that given  $a,b\in\N$ such that $a+b$ takes the minimal possible value subject to $ab\ge r(r-1)/2$,
such representation $\pi$ is possible for $p=2$ (and impossible for $p=1$).
In fact, it turns out that given $a,b\in\N$, with
$ab\ge r(r-1)/2$,
then any random injective map
$\pi_0:\k^r\to \text{Hom}(\mathbb{K}^2,\mathbb{K}^a)\oplus
\text{Hom}(\mathbb{K}^b,\mathbb{K}^2)$
extends to a faithful representation
$\pi:\mathcal{L}_{r,2}\to\text{End}(\mathbb{K}^a\oplus\mathbb{K}^2\oplus\mathbb{K}^c)$
as in \eqref{eq.cond1}.

We think that this random
property of minimal representations could provide
a new perspective for constructing low-dimensional representations.
In particular,
it seems to us that this is a general pattern for the
free  $k$-step nilpotent Lie algebra on $r$ generators $\mathcal{L}_{r,k}$.
More precisely, we think that the following claim is true:
given $a_0,a_k\in\N$ such that $a_0a_k\ge \dim\mathfrak{z}(\mathcal{L}_{r,k})$,
then there are $a_1,\dots,a_{k-1}\in\N$,
very low compared to $\max\{a_0,a_k\}$,
such that any random injective map
$\pi_0:\k^r\to \bigoplus_{i=1}^k\text{Hom}(\mathbb{K}^{a_i},\mathbb{K}^{a_{i-1}})$
extends to a faithful representation
$\pi:\mathcal{L}_{r,k}\to\text{End}\big(\bigoplus_{i=0}^k\mathbb{K}^{a_i}\big)$.
Here, `very low compared to' means that, eventually,
$\mu(\mathcal{L}_{r,k})
\sim \mu(\mathfrak{z}(\mathcal{L}_{r,k}))\sim 2\sqrt{\frac{r^k}{k}}$.
This is considerably lower than the upper bound $\mu(\n)\le (\dim\n)^k+1$
for a $k$-step nilpotent Lie algebra $\n$ (see \cite{Re}).
Note also that the Lo and Ostheimer algorithm produces a representation of
dimension $1 +r+r^2+\dots+r^k$ for the free nilpotent group of rank $r$
and class $k$ (see \cite[Prop. 6.1]{LO}).
For low ranks, our computer experiments show that

\medskip

\begin{center}
\begin{tabular}{rccccccccc }
 $r=$ & 2 & 3 & 4 & 5 & 6 & 7 & 8 & 9 & 10   \\[3mm]
 $\mu(\mathcal{L}_{r,3})\le $ & 6 & 9 & 14 & 18 & 22 & 27 & 32 & 37 & 43  \\[2mm]
$\frac{\text{2nd row}}{2\sqrt{r^3/3}}=$ & 1.84 & 1.50 & 1.52 & 1.40 & 1.30 & 1.26 & 1.23 & 1.18 & 1.18  \\[5mm]
 $\mu(\mathcal{L}_{r,4})\le $ & 8 & 15 & 23 & 34 & 47 & 62 & 79 & 101 & 122\\[2mm]
  $\frac{\text{4th row}}{2\sqrt{r^4/4}}=$ & 2.00 & 1.67 & 1.44 & 1.36 & 1.30 & 1.26 & 1.24 & 1.24 & 1.22
\end{tabular}
\end{center}

\vspace{1mm}

The paper is organized as follows.
In \S2 we give some basic results about $\mathcal{L}_{r,2}$.
In \S3 we prove the lower bound for $\mu(\mathcal{L}_{r,2})$
for all $r \in \mathbb{N}$. Our proof is basically by induction and requires some previous technical results. Even though the lower bound
is so close to $\mu(\mathfrak{z}(\mathcal{L}_{r,2}))$
our proof turns out to be laborious and even the basic case $r=5$ in the induction argument requires a considerable amount of work.
Finally, in \S4 we prove that the proposed value for
$\mu(\mathcal{L}_{r,2})$ is actually attained by a difficult existence argument: we would be very interested in an explicit map describing a representation of $\mathcal{L}_{r,2}$ of dimension
 $\mu(\mathcal{L}_{r,2})$.

%====================================================================================

\section{Preliminaries}\label{preliminares}

%====================================================================================

Let $\mathfrak{g}$ be a  Lie algebra and let $V$ be a vector space.
A representation $(\pi, V)$ of $\mathfrak{g}$ on $V$ is a Lie
homomorphism $\pi: \mathfrak{g} \rightarrow \mathfrak{gl}(V)$
and we say that
\begin{enumerate}[(1)]
\item $(\pi, V)$ is \emph{faithful} if $\pi$ is an injective.
\item $(\pi, V)$ is a \emph{nilrepresentation} if $\pi(X)$ is a nilpotent endomorphism for all
      $X \in \mathfrak{g}$.
\end{enumerate}
Let
$$
\mu_{nil}(\mathfrak{g})= \min\{\dim V : (\pi, V) \text{ is a faithful
nilrepresentation of } \mathfrak{g}\}.
$$
We know that if $\mathfrak{g}$ is nilpotent and
the center $\mathfrak{z}(\mathfrak{g})$
of $\mathfrak{g}$ is contained in $[\mathfrak{g}, \mathfrak{g}]$,
then
\begin{equation}\label{eq:munil mu}
\mu_{nil}(\mathfrak{g})= \mu(\mathfrak{g})
\end{equation}
see \cite[Theorem 2.4]{CR1}.

Given $r \in \mathbb{N}$, the free $2$-step nilpotent Lie algebra of rank $r$
 is the vector space
 \[
\mathcal{L}_{r,2}= \k^r\oplus \bigwedge^2 \k^r
 \]
 equipped with the Lie algebra structure
 \[
  [X,Y]= X\wedge Y,\quad X,Y\in\k^r.
 \]
For example, $\mathcal{L}_{2,2}$ is the Heisenberg Lie algebra of dimension $3$.

The Lie algebra $\mathcal{L}_{r,2}$ has dimension $r + \frac{r(r-1)}{2}$ and possesses the following universal property: if  $\mathfrak{h}$ is a Lie algebra and $f: \k^r\to \mathfrak{h}$ is a linear map satisfying
\[
[f(\k^r), [f(\k^r), f(\k^r)]] = 0,
\]
then there is a unique extension
$\bar f : \mathcal{L}_{r,2} \to \mathfrak{h}$ of $f$
to a homomorphism of Lie algebras.

Since $\mathfrak{z}(\mathcal{L}_{r,2})=[\mathcal{L}_{r,2},  \mathcal{L}_{r,2}]$,
 it follows from  \eqref{eq:munil mu} that
$ \mu_{nil}(\mathcal{L}_{r,2})= \mu(\mathcal{L}_{r,2})$.
Some additional properties of  $\mathcal{L}_{r,2}$ are
stated in the following proposition.

\begin{proposition}\label{prop.basic}
If $\mathfrak{a}$ is a proper
Lie subalgebra of $\mathcal{L}_{r,2}$ then
$\mathfrak{a}=\mathfrak{a}_1\oplus\mathfrak{a}_2$ with
$\mathfrak{a}_1$ and $\mathfrak{a}_2$ subalgebras,
$\mathfrak{a}_2\subset\bigwedge^2 \k^r$, such that
$\mathfrak{a}_1$ is either zero, 1-dimensional or
isomorphic to a free 2-step nilpotent Lie algebra
of rank less than $r$.
In particular, if the center of $\mathfrak{a}$ is not contained in $\bigwedge^2 \k^r$, then
$\mathfrak{a}$ is abelian and
$\dim\big(\mathfrak{a}/(\mathfrak{a} \cap \bigwedge^2 \k^r)\big)= 1$.
\end{proposition}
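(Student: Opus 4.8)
The plan is to analyze a proper subalgebra $\mathfrak{a}\subset\mathcal{L}_{r,2}=\k^r\oplus\bigwedge^2\k^r$ by looking at its image under the projection $\rho:\mathcal{L}_{r,2}\to\k^r$ with kernel $\bigwedge^2\k^r$. Write $W=\rho(\mathfrak{a})\subseteq\k^r$ and $\mathfrak{a}_2=\mathfrak{a}\cap\bigwedge^2\k^r$; the latter is automatically an (abelian) subalgebra contained in the center. The first step is to observe that since $\bigwedge^2\k^r$ is central, $[\mathfrak{a},\mathfrak{a}]$ depends only on $W$: in fact $[\mathfrak{a},\mathfrak{a}]=W\wedge W\subseteq\mathfrak{a}_2$. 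Then I would split into the three cases according to $\dim W$.

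If $\dim W=0$, then $\mathfrak{a}=\mathfrak{a}_2\subseteq\bigwedge^2\k^r$ is abelian and we may take $\mathfrak{a}_1=0$. If $\dim W=1$, say $W=\k X$, then $W\wedge W=0$, so $[\mathfrak{a},\mathfrak{a}]=0$ and $\mathfrak{a}$ is abelian; choosing any $A\in\mathfrak{a}$ with $\rho(A)=X$, the line $\mathfrak{a}_1=\k A$ is a $1$-dimensional subalgebra complementary to $\mathfrak{a}_2$ inside $\mathfrak{a}$, giving $\mathfrak{a}=\mathfrak{a}_1\oplus\mathfrak{a}_2$. The interesting case is $\dim W=s$ with $2\le s\le r$ (and $s<r$ when $\mathfrak{a}$ is a proper subalgebra only if additionally $\mathfrak{a}_2\subsetneq\bigwedge^2\k^r$, but note that even $W=\k^r$ with small $\mathfrak{a}_2$ gives a proper subalgebra — I need to handle $s\le r$). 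Here I would use the universal property of $\mathcal{L}_{s,2}$: pick a basis $e_1,\dots,e_s$ of $W$, let $A_1,\dots,A_s\in\mathfrak{a}$ with $\rho(A_i)=e_i$, and let $f:\k^s\to\mathfrak{a}$ send the $i$-th standard vector to $A_i$. Since $\mathfrak{a}$ is $2$-step nilpotent, $f$ extends to a Lie homomorphism $\bar f:\mathcal{L}_{s,2}\to\mathfrak{a}$, which is injective because $\rho\circ\bar f$ restricted to $\k^s$ is injective and $\bar f(\bigwedge^2\k^s)=W\wedge W=[\mathfrak{a},\mathfrak{a}]$ has dimension $\binom{s}{2}$ (the vectors $e_i\wedge e_j$ are linearly independent in $\bigwedge^2\k^r$). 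Set $\mathfrak{a}_1=\bar f(\mathcal{L}_{s,2})$, a free $2$-step nilpotent subalgebra of rank $s$.

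The remaining point is to produce the complement $\mathfrak{a}_2$ and check $\mathfrak{a}=\mathfrak{a}_1\oplus\mathfrak{a}_2$ with $\mathfrak{a}_2\subseteq\bigwedge^2\k^r$. We have $\mathfrak{a}_1\cap\bigwedge^2\k^r=W\wedge W=[\mathfrak{a},\mathfrak{a}]\subseteq\mathfrak{a}\cap\bigwedge^2\k^r$, so choose a vector space complement $\mathfrak{a}_2$ of $W\wedge W$ inside $\mathfrak{a}\cap\bigwedge^2\k^r$; this is a subalgebra (abelian, central). A dimension count gives $\dim\mathfrak{a}_1+\dim\mathfrak{a}_2 = (s+\binom{s}{2})+(\dim(\mathfrak{a}\cap\bigwedge^2\k^r)-\binom{s}{2}) = s+\dim(\mathfrak{a}\cap\bigwedge^2\k^r)=\dim\mathfrak{a}$, and $\mathfrak{a}_1\cap\mathfrak{a}_2\subseteq\mathfrak{a}_1\cap\bigwedge^2\k^r\cap\mathfrak{a}_2 = W\wedge W\cap\mathfrak{a}_2=0$, so the sum is direct and equals $\mathfrak{a}$; since $[\mathfrak{a}_1,\mathfrak{a}_2]\subseteq[\mathcal{L}_{r,2},\bigwedge^2\k^r]=0$ this is a direct sum of Lie algebras, and when $s=1$ we recover the $1$-dimensional case. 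Finally, for the ``in particular'' clause: if the center $\mathfrak{z}(\mathfrak{a})$ is not contained in $\bigwedge^2\k^r$, pick $Z\in\mathfrak{z}(\mathfrak{a})$ with $\rho(Z)=X\ne0$; for every $A\in\mathfrak{a}$ we get $0=[Z,A]=\rho(Z)\wedge\rho(A)=X\wedge\rho(A)$, forcing $\rho(A)\in\k X$, hence $\dim W=1$, $\mathfrak{a}$ is abelian by the $\dim W=1$ case, and $\dim(\mathfrak{a}/(\mathfrak{a}\cap\bigwedge^2\k^r))=\dim W=1$.

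I expect the main obstacle to be nothing deep but rather the bookkeeping in the $\dim W\ge 2$ case: one must be careful that the extension $\bar f$ from the universal property is genuinely injective (which reduces to the linear independence of $\{e_i\wedge e_j\}_{i<j}$ in $\bigwedge^2\k^r$, a standard fact), and that the chosen complement $\mathfrak{a}_2$ interacts correctly so that $\mathfrak{a}_1\oplus\mathfrak{a}_2$ is a decomposition as Lie algebras and not merely as vector spaces — which is immediate here since $\bigwedge^2\k^r$ is central. No step requires more than elementary linear algebra together with the universal property already recalled in the excerpt.
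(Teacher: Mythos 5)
Your construction is essentially the one in the paper: the paper picks a basis $\{A_1,\dots,A_k\}$ of a complement of $\mathfrak{a}\cap\bigwedge^2\k^r$ in $\mathfrak{a}$ and takes the subalgebra it generates, which is exactly your $\mathfrak{a}_1=\bar f(\mathcal{L}_{s,2})$ with $s=\dim\rho(\mathfrak{a})=k$; your verifications (injectivity of $\bar f$, the dimension count for $\mathfrak{a}=\mathfrak{a}_1\oplus\mathfrak{a}_2$, centrality of $\mathfrak{a}_2$, and the clean argument for the ``in particular'' clause via $0=[Z,A]=\rho(Z)\wedge\rho(A)$) are all correct. However, there is one genuine error: the parenthetical claim that ``even $W=\k^r$ with small $\mathfrak{a}_2$ gives a proper subalgebra'' is false, and because you believe it you never establish the part of the statement asserting that $\mathfrak{a}_1$ is free $2$-step nilpotent of rank \emph{less than} $r$; as written your argument only gives rank $\le r$, which is strictly weaker than the proposition (and this strict inequality is what the paper later uses).

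The missing step is short: if $W=\rho(\mathfrak{a})=\k^r$, then since a Lie subalgebra contains its derived algebra and $[\mathfrak{a},\mathfrak{a}]=W\wedge W$, we get $\bigwedge^2\k^r\subseteq\mathfrak{a}$; now for any $X\in\k^r$ choose $A\in\mathfrak{a}$ with $\rho(A)=X$, so that $A-X\in\bigwedge^2\k^r\subseteq\mathfrak{a}$ and hence $X\in\mathfrak{a}$. Thus $\mathfrak{a}=\mathcal{L}_{r,2}$, contradicting properness, so properness forces $s=\dim W<r$. This is precisely the point the paper dispatches with ``since $\mathfrak{a}$ is proper, $k<r$.'' With this one-line correction (and deleting the false parenthetical), your proof is complete and follows the same route as the paper's, the only cosmetic differences being that you phrase the construction through the projection $\rho$ and argue the ``in particular'' statement directly rather than through the decomposition.
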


\begin{proof}
 Let
 $\{A_1,\dots,A_k\}$ be a basis of a linear complement of
 $\mathfrak{a}\cap\bigwedge^2 \k^r$ in $\mathfrak{a}$.
 If $\mathfrak{a}_1$ is the Lie subalgebra of $\mathfrak{a}$
 generated by $\{A_1,\dots,A_k\}$,
 then $\mathfrak{a}_1\simeq \mathcal{L}_{k,2}$ and, since
 $\mathfrak{a}$ is proper, $k<r$.
 If  $\mathfrak{a}_2$ is a linear complement of $\mathfrak{a}_1$ in $\mathfrak{a}$
 we have $\mathfrak{a}=\mathfrak{a}_1\oplus\mathfrak{a}_2$ and
 $\mathfrak{a}_2\subset\bigwedge^2 \k^r$.
 If the center of $\mathfrak{a}$ is not contained in
 $\bigwedge^2 \k^r$, then $k=1$, for if $k\ge2$ (or $k=0$)
 then the center of
 $\mathfrak{a}$ is $[\mathfrak{a}_1,\mathfrak{a}_1]\cap \mathfrak{a}_2\subset\bigwedge^2 \k^r$.
\end{proof}

 We know that $\mathcal{L}_{r,2}$ is the nilradical of a parabolic subalgebra of the semisimple
Lie algebra of rank $r$ of type $B$, where $\k^r$
corresponds to the set of short positive roots $\epsilon_i$,
$i=1,\dots,r$, and $\bigwedge^2 \k^r$
corresponds to the set
of positive roots $\epsilon_i+\epsilon_j$.
This provides a standard faithful representation
$\pi_0:\mathcal{L}_{r,2}\to \mathfrak{gl}(2r+1,\mathbb{K})$
of dimension $2r+1$.
More precisely,
if $\{X_1, \dots, X_r\}$ is a basis of $\k^r$ and,
for $1 \leq i < j \leq r$, $Z_{ij}=[X_i,X_j]$, then
\newcommand{\qc}{7.5}
\newcommand{\qp}{7.9}
\[
\setlength{\unitlength}{12pt}
\begin{picture}(14,10.15)(-4,-.2)%(10,10)
\thicklines
\put(-10.3,5){$\displaystyle\pi_0\left(\sum_{i= 1}^{r} x_i X_i + \sum_{i<j}^r z_{ij} Z_{ij}\right)=$}
\put(2,0){\line(0,1){10}}
\put(16,0){\line(0,1){10}}
\put(2,0){\line(1,0){.2}}
\put(2,10){\line(1,0){.2}}
\put(15.8,0){\line(1,0){.2}}
\put(15.8,10){\line(1,0){.2}}

\linethickness{0.1mm}
\multiput(\qc,0)(0,.2){50}{\line(0,1){.1}}
\multiput(9,0)(0,.2){50}{\line(0,1){.1}}
\multiput(2,5.5)(.2,0){70}{\line(1,0){.1}}
\multiput(2,4)(.2,0){70}{\line(1,0){.1}}

\put(4.75,4.5){\small$ 0$}
\put(8,4.5){\tiny$ 0$}
\put(12.5,1.5){\small$ 0 $}
\put(4.75,1.5){\small$0$}
\put(8,1.5){\small$ 0 $}
\put(4.75,7.5){\small$0$}
\put(\qp,9.35){\tiny$x_1$}
\put(\qp,8.35){\tiny$x_2$}
\put(8.1,7){\small$\vdots$}
\put(8,6){\tiny$x_r$}
\put(9.3,4.5){\tiny$x_r$}
\put(11.15,4.5){\small$\dots$}
\put(13.5,4.5){\tiny$x_2$}
\put(15,4.5){\tiny$x_1$}
\put(14.85,9.3){\tiny$0$}
\put(13.85,8.35){\tiny$-z_{12}$}
\put(15,7){\small$\vdots$}
\put(13.85,6){\tiny$-z_{1r}$}
\put(12.15,9.3){\tiny$z_{12}$}
\put(12.5,8.2){\tiny$0$}
\put(12.65,7){\small$\vdots$}
\put(11.85,6){\tiny$-z_{2r}$}
\put(10.75,9.3){\small$\dots$}
\put(10.75,7.0){\reflectbox{$\ddots$}}
\put(9.5,6){\tiny$0$}
\put(9.6,7){\small$\vdots$}
\put(9.2,9.3){\tiny$z_{1r}$}
\put(9.2,8.3){\tiny$z_{12}$}
\put(10.75,8.3){\small$\dots$}
\put(10.5,6.1){\small$\dots$}
\put(9.2,-0.05){$\underbrace{\rule{82pt}{0pt}}_\text{\tiny{$r$}}$}
\put(7.4,-0.05){$\underbrace{\rule{10pt}{0pt}}_\text{\tiny{$1$}}$}
\put(1.9,-0.05){$\underbrace{\rule{63pt}{0pt}}_\text{\tiny{$r$}}$}
\put(15.9,7.7){$\left.\rule{0mm}{11mm}\right\}\text{\tiny{$r$}}$}
\put(15.9,4.65){$\left.\rule{0mm}{4mm}\right\}\text{\tiny{$1$}}$}
\put(15.9,1.75){$\left.\rule{0mm}{9mm}\right\}\text{\tiny{$r$}}$}
\end{picture}
\vspace{3.75mm}
\]
\noindent
This shows that $\mu(\mathcal{L}_{r,2}) \leq 2r + 1$.
\begin{definition}\label{def1}
Let $a, p, b$ be natural numbers. We say that a representation of
$\pi:\mathcal{L}_{r,2}\to \mathfrak{gl}(V)$ is
of type $(a,p,b)$ if there is a basis
$$B=\{u_1,\dots,u_a,v_1,\dots,v_p,w_1,\dots,w_b\}$$
of $V$ such that the corresponding matrix representation $\pi_B$
associated to $\pi$ satisfies
\begin{equation}\label{eq.apb_block}
  \setlength{\unitlength}{11pt}
\begin{picture}(14,8.35)(2,-1)
\thicklines
\put(0.5,4){$\pi_B(X)=$}
\put(18,4){ for all $X\in\mathcal{L}_{r,2}$.}
\put(5.5,0){\line(0,1){8}}
\put(15.5,0){\line(0,1){8}}
\put(15.3,8){\line(1,0){.2}}
\put(5.5,8){\line(1,0){.2}}
\put(5.5,0){\line(1,0){.2}}
\put(15.3,0){\line(1,0){.2}}

\linethickness{0.1mm}
\multiput(11.5,0)(0,.2){40}{\line(0,1){.1}}
\multiput(9.5,0)(0,.2){40}{\line(0,1){.1}}
\multiput(5.5,3)(.2,0){50}{\line(1,0){.1}}
\multiput(5.5,5)(.2,0){50}{\line(1,0){.1}}

\put(13.2,3.7){$*$}
\put(13.2,6.35){$*$}
\put(10.2,6.35){$*$}
\put(7.2,6.35){$ 0 $}
\put(7.2,3.7){$0$}
\put(10.2,3.7){$0$}
\put(7.2,1.2){$ 0 $}
\put(10.2,1.2){$0$}
\put(13.2,1.2){$0$}
\put(11.65,0){$\underbrace{\rule{40pt}{0pt}}_\text{\small{$b$}}$}
\put(9.5,0){$\underbrace{\rule{15pt}{0pt}}_\text{\small{$p$}}$}
\put(5.5,0){$\underbrace{\rule{40pt}{0pt}}_\text{\small{$a$}}$}

\put(15.35,6.25){$\left.\rule{0mm}{7mm}\right\}\text{{$a$}}$}
\put(15.35,3.75){$\left.\rule{0mm}{4mm}\right\}\text{{$p$}}$}
\put(15.35,1.25){$\left.\rule{0mm}{7mm}\right\}\text{{$b$}}$}
\end{picture}
\end{equation}
\end{definition}

\begin{remark}
The standard representation $\pi_0$
is a faithful representation of type $(r,1,r)$,
but it is not
of least dimension among all faithful
representations of type $(a,1,b)$.
Indeed, let $\pi_1$ be the representation defined, for $X=\sum_{i= 1}^{r} x_i X_i + \sum_{i<j}^r z_{ij} Z_{ij}$, by
$$
\setlength{\unitlength}{13pt}
\begin{picture}(12,12)(3,-1)
\thicklines
\put(1,-2){\line(0,1){13}}
\put(19.25,-2){\line(0,1){13}}
\put(1,-2){\line(1,0){.2}}
\put(1,11){\line(1,0){.2}}
\put(19.05,-2){\line(1,0){.2}}
\put(19.05,11){\line(1,0){.2}}

\linethickness{0.1mm}
\multiput(7.25,-2)(0,.2){65}{\line(0,1){.1}}
\multiput(9.25,-2)(0,.2){65}{\line(0,1){.1}}
\multiput(1,5.5)(.2,0){92}{\line(1,0){.1}}
\multiput(1,4)(.2,0){92}{\line(1,0){.1}}

\put(4,4.5){\small$0$}
\put(8.15,4.5){\tiny$ 0$}
\put(13.75,0.75){\small$0$}
\put(4,0.75){\small$0$}
\put(8.25,0.75){\small$0$}
\put(4,8){\small$0$}
\put(8,10.5){\tiny$x_1$}
\put(8,9.5){\tiny$x_2$}
\put(8,8.5){\tiny$x_3$}
\put(8.15,7.5){\tiny$\vdots$}
\put(7.75,6.85){\tiny$x_{r-2}$}
\put(7.75,5.85){\tiny$x_{r-1}$}
\put(10.5,4.5){\tiny$x_{r}$}
\put(12,4.5){\tiny$x_{r-1}$}
\put(13.75,4.5){\tiny$\dots$}
\put(15.5,4.5){\tiny$x_3$}
\put(18,4.5){\tiny$x_2$}
\put(15.25,9.3){\tiny$z_{23}$}
\put(15.25,10.5){\tiny$z_{13}$}
\put(15.5,8.5){\tiny$0$}
\put(15.7,7.5){\tiny$\vdots$}
\put(14.75,6){\tiny$-z_{3r-1}$}
\put(14.75,6.85){\tiny$-z_{3r-2}$}
\put(17.5,10.5){\tiny$z_{12}$}
\put(18,9.2){\tiny$0$}
\put(17.25,8.5){\tiny$-z_{23}$}
\put(18,7.5){\tiny$\vdots$}
\put(17.1,6.85){\tiny$-z_{2r-2}$}
\put(17.1,6){\tiny$-z_{2r-1}$}
\put(13.75,10.5){\tiny$\dots$}
\put(13.75,9.5){\tiny$\dots$}
\put(13.75,8.6){\tiny$\dots$}
\put(14,7.4){\reflectbox{$\ddots$}}
\put(13.75,6.8){\tiny$\dots$}
\put(13.75,6){\tiny$\dots$}
\put(11.5,6.85){\tiny$z_{r-2 r-1}$}
\put(12.25,5.85){\tiny$0$}
\put(9.75,5.85){\tiny$z_{r-1 r}$}
\put(9.75,6.85){\tiny$z_{r-2 r}$}
\put(10,7.5){\tiny$\vdots$}
\put(11.5,10.5){\tiny$z_{1r-1}$}
\put(11.5,9.5){\tiny$z_{2r-1}$}
\put(11.5,8.5){\tiny$z_{3r-1}$}
\put(9.75,10.5){\tiny$z_{1r}$}
\put(9.75,9.5){\tiny$z_{2r}$}
\put(9.75,8.5){\tiny$z_{3r}$}
\put(11.75,7.5){\tiny$\vdots$}
\put(9.27,-2){$\underbrace{\rule{129pt}{0pt}}_\text{\tiny{$r-1$}}$}
\put(7.28,-2){$\underbrace{\rule{25pt}{0pt}}_\text{\tiny{$1$}}$}
\put(1.1,-2){$\underbrace{\rule{78pt}{0pt}}_\text{\tiny{$r-1$}}$}
\put(-2.7,4.5){$\displaystyle\pi_1\left(X\right)=$}
\put(19.05,8){$\left.\rule{0mm}{14mm}\right\}\text{\tiny{$r-1$}}$}
\put(19.05,4.5){$\left.\rule{0mm}{4mm}\right\}\text{\tiny{$1$}}$}
\put(19.05,0.75){$\left.\rule{0mm}{15mm}\right\}\text{\tiny{$r-1$}}$}
\end{picture}
\vspace{6mm}
$$

\noindent
It is not difficult to see that
$(\pi_1, \k^{2r-1})$ is  faithful
of type $(r-1, 1, r-1)$.
We will show next that $2r-1$ is the least
possible dimension for a faithful
representations of $\mathcal{L}_{r,2}$ of type $(a,1,b)$.
\end{remark}

\begin{proposition}\label{prop.type}
Let $a, p, b \in \mathbb{N}$ and let $\pi:\mathcal{L}_{r,2}\to \mathfrak{gl}(V)$ be
 a faithful representation
of type $(a,p,b)$. Then
\[
r\le p\min(a,b)+1\qquad \text{ and }\qquad  \frac{r(r-1)}{2}\le ab.
\]
In particular, if $p=1$ then the minimal possible dimension of $V$
is $2r-1$.
\end{proposition}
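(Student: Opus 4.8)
The plan is to extract from the block shape in Definition~\ref{def1} two linear maps attached to $\pi$ and to bound $r$ and $\binom r2$ by combining faithfulness with Proposition~\ref{prop.basic}; I assume $r\ge 2$, the case $r=1$ being trivial. Fix a basis $B$ as in \eqref{eq.apb_block} and write, for $X\in\mathcal{L}_{r,2}$,
\[
\pi_B(X)=\begin{pmatrix}0&A(X)&B(X)\\ 0&0&C(X)\\ 0&0&0\end{pmatrix},\qquad A(X)\in\k^{a\times p},\quad B(X)\in\k^{a\times b},\quad C(X)\in\k^{p\times b},
\]
with $A,B,C$ linear in $X$. A one-line block computation gives, for $X,Y\in\mathcal{L}_{r,2}$,
\[
\pi_B([X,Y])=[\pi_B(X),\pi_B(Y)]=\begin{pmatrix}0&0&A(X)C(Y)-A(Y)C(X)\\ 0&0&0\\ 0&0&0\end{pmatrix}.
\]
In particular $\pi_B$ carries $[\mathcal{L}_{r,2},\mathcal{L}_{r,2}]=\bigwedge^2\k^r$ into matrices supported on the $(1,3)$-block, so $A$ and $C$ vanish on $\bigwedge^2\k^r$.

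For the inequality $\tfrac{r(r-1)}{2}\le ab$: faithfulness makes $\pi_B$ injective on $\bigwedge^2\k^r$, and on $\bigwedge^2\k^r$ the matrix $\pi_B(Z)$ is determined by its $(1,3)$-block alone; hence $Z\mapsto(\text{that block})$ embeds $\bigwedge^2\k^r$ linearly into $\k^{a\times b}$, so $\dim\bigwedge^2\k^r=\tfrac{r(r-1)}{2}\le ab$.

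For the inequality $r\le p\min(a,b)+1$, set $\mathfrak b:=\ker A=\{X\in\mathcal{L}_{r,2}:A(X)=0\}$, a subspace containing $\bigwedge^2\k^r$. By the bracket formula, $X,Y\in\mathfrak b$ forces $\pi_B([X,Y])=0$, hence $[X,Y]=0$ by faithfulness, so $\mathfrak b$ is an abelian subalgebra, and it is proper because $\mathcal{L}_{r,2}$ is not abelian. If $\mathfrak b\subseteq\bigwedge^2\k^r$ then $\mathfrak b=\bigwedge^2\k^r$; otherwise the center of $\mathfrak b$, which is $\mathfrak b$ itself, is not contained in $\bigwedge^2\k^r$, and Proposition~\ref{prop.basic} gives $\dim\big(\mathfrak b/(\mathfrak b\cap\bigwedge^2\k^r)\big)=1$. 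Since $\mathfrak b\cap\bigwedge^2\k^r=\bigwedge^2\k^r$ in both cases, $\dim\big(\mathfrak b/\bigwedge^2\k^r\big)\le 1$. As $A$ vanishes on $\bigwedge^2\k^r$ it descends to a linear map $\mathcal{L}_{r,2}/\bigwedge^2\k^r\cong\k^r\to\k^{a\times p}$ with kernel $\mathfrak b/\bigwedge^2\k^r$, so $r-1\le\rank A\le\dim\k^{a\times p}=ap$, that is $r\le ap+1$. Running the same argument with $C$ in place of $A$ (its kernel is again an abelian subalgebra, since the $(1,3)$-block of $[\pi_B(X),\pi_B(Y)]$ also vanishes there) yields $r\le bp+1$, whence $r\le p\min(a,b)+1$. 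When $p=1$ this reads $\min(a,b)\ge r-1$, so $\dim V=a+b+1\ge 2(r-1)+1=2r-1$; combined with the faithful representation $\pi_1$ of dimension $2r-1$ and type $(r-1,1,r-1)$ displayed above, the least dimension of a faithful representation of $\mathcal{L}_{r,2}$ of type $(a,1,b)$ is exactly $2r-1$.

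The one genuinely non-formal point is the recognition that $\ker A$ and $\ker C$ are abelian subalgebras of $\mathcal{L}_{r,2}$ and that Proposition~\ref{prop.basic} then forces their codimension over $\bigwedge^2\k^r$ to be at most $1$; everything else is bookkeeping with the $(a,p,b)$ block form.
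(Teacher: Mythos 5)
Your proof is correct and takes essentially the same route as the paper: both parts rest on the observations that $\pi_B$ embeds $[\mathcal{L}_{r,2},\mathcal{L}_{r,2}]=\bigwedge^2\k^r$ into the upper-right $a\times b$ block (giving $\tfrac{r(r-1)}{2}\le ab$) and that two elements of $\k^r$ whose middle block vanishes have commuting images, which faithfulness and the freeness of $\mathcal{L}_{r,2}$ rule out for two independent generators, yielding the rank bound $r-1\le p\min(a,b)$. The only cosmetic difference is that you package this last step by showing $\ker A$ (and symmetrically $\ker C$) is an abelian subalgebra and invoking Proposition \ref{prop.basic}, whereas the paper assumes $b\ge a$ and produces two commuting generator images directly by Gaussian elimination on $\operatorname{span}_\k\{\pi_B(X_1),\dots,\pi_B(X_r)\}$.
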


\begin{proof}
Let $B$ be a basis of $V$ as in Definition \ref{def1} and let $\pi_B$ be the corresponding matrix representation.
It is clear that for any $Z\in [\mathcal{L}_{r,2},\mathcal{L}_{r,2}]$,
the non-zero entries of
$\pi_B(Z)$ are contained in the upper-right block of \eqref{eq.apb_block}.
Since $\pi$ is faithful and
$\dim [\mathcal{L}_{r,2},\mathcal{L}_{r,2}]=\frac{r(r-1)}{2}$,
it follows
 that $\frac{r(r-1)}{2}\le ab$.

In order to prove that $r\le p\min(a,b)+1$ let us fix
a set $\{X_1, \dots,X_r\}$ of generators of $\mathcal{L}_{r,2}$ and
assume $b\ge a$.
Let
\[
 M=\text{span}_{\k}\{\pi_B(X_1), \dots, \pi_B(X_r)\}.
\]
If $r\ge pa+2$,
after a Gaussian elimination
process, we may find a basis $\{M_1, \dots,M_r\}$ of $M$
such that $M_{r-1}$ and $M_r$ have the following block structure
\[
  \setlength{\unitlength}{11pt}
\begin{picture}(19,10)(2,-1.5)
\thicklines
\put(5.5,0){\line(0,1){8}}
\put(15.5,0){\line(0,1){8}}
\put(15.3,8){\line(1,0){.2}}
\put(5.5,8){\line(1,0){.2}}
\put(5.5,0){\line(1,0){.2}}
\put(15.3,0){\line(1,0){.2}}

\linethickness{0.1mm}
\multiput(11.5,0)(0,.2){40}{\line(0,1){.1}}
\multiput(9.5,0)(0,.2){40}{\line(0,1){.1}}
\multiput(5.5,3)(.2,0){50}{\line(1,0){.1}}
\multiput(5.5,5)(.2,0){50}{\line(1,0){.1}}

\put(13.2,3.7){$*$}
\put(13.2,6.35){$*$}
\put(10.2,6.35){$0$}
\put(7.2,6.35){$ 0 $}
\put(7.2,3.7){$0$}
\put(10.2,3.7){$0$}
\put(7.2,1.2){$ 0 $}
\put(10.2,1.2){$0$}
\put(13.2,1.2){$0$}
\put(11.65,0){$\underbrace{\rule{42pt}{0pt}}_\text{\small{$b$}}$}
\put(9.5,0){$\underbrace{\rule{22pt}{0pt}}_\text{\small{$p$}}$}
\put(5.5,0){$\underbrace{\rule{42pt}{0pt}}_\text{\small{$a$}}$}

\put(15.35,6.25){$\left.\rule{0mm}{7mm}\right\}\text{{$a$}}$}
\put(15.35,3.75){$\left.\rule{0mm}{4mm}\right\}\text{{$p$}}$}
\put(15.35,1.25){$\left.\rule{0mm}{7mm}\right\}\text{{$b$}}$}
\end{picture}
\]
Hence $[M_{n-1},M_n]=0$ and this contradicts the fact that $\pi$ is faithful.
Finally, if $p=1$ then
$\dim V=a+b+1\ge 2\min(a,b)+1\ge 2r-1$.
\end{proof}

%====================================================================================

\section{The lower bound for \texorpdfstring{$\mu(\mathcal{L}_{r,2})$}{m(Lr)}}\label{lower}

%====================================================================================
In this section we will prove that
\begin{equation}\label{eq.lower_bound}
 \mu(\mathcal{L}_{r,2}) \geq
 \begin{cases}
  \left\lceil \sqrt{2r(r-1)}  \right\rceil + 2,
  & \text{if $r \geq 4$;} \\[4mm]
  2r-1,
  & \text{if $r=2, 3, 4, 5$.}
 \end{cases}
\end{equation}

\medskip

The cases $r=2,3$ are well known.
On the one hand, $\mathcal{L}_{2,2}$ is a Heisenberg Lie algebra
and $\mu(\mathcal{L}_{2,2})= 3$ follows from \cite{Bu1}.
On the other hand,
$\dim \mathcal{L}_{3,2}= 6$ and $\mathcal{L}_{3,2}$
is isomorphic to the Lie algebra labeled as
$L_{6,26}$ in the paper \cite{Ro2}, and it
follows that $\mu(\mathcal{L}_{3,2})= 5$.

For the rest of this section we assume $r\ge4$.
The main tool used in the proof of \eqref{eq.lower_bound}
is the following result which is a particular instance of  \cite[Theorem 2.3]{CR2}.

\begin{theorem}\label{teo:descomposicion}
Let  $V$ be a vector space and let
$0\ne\mathfrak{n}_2\subset  \mathfrak{n}_1$ be a chain of vector subspaces of $\End(V)$.
Then there exist natural numbers $s_1 \geq s_2>0$,
a linearly independent set
$\{v_1, \dots, v_{s_1}\} \subset V$
and a family of non-zero
subspaces $\mathfrak{n}_{k,j}\subset \End(V)$, $1\le j\le s_k$
and $k= 1, 2$,
such that:
$$
\begin{array}{cccccccccccccc}
\!&\!{\mathfrak{n}}_{1}\!&\!=
\!&\!{\mathfrak{n}}_{{1},1} \!&\!
\oplus\!&\!{\mathfrak{n}}_{{1},2} \!&\! \oplus  \dots  \oplus \!&\! {\mathfrak{n}}_{{1}, s_{2}}\!&\! \oplus  \dots  \oplus \!&\! {\mathfrak{n}}_{{1}, s_{1}}\\
\!&\!    \cup       \!&\! \!&\! \cup             \!&\!                          \!&\!  \cup          \!&\!\!&\!  \cup          \!&\!                             \!&\!     \\
\!&\!{\mathfrak{n}}_{2}\!&\!=
\!&\!{\mathfrak{n}}_{2,1} \!&\! \oplus
\!&\!{\mathfrak{n}}_{2,2} \!&\! \oplus\dots  \oplus \!&\! {\mathfrak{n}}_{2, s_{2}}
\end{array}
$$
and
\begin{enumerate}[(1)]
\item\label{it.1} $A\in \mathfrak{n}_{1,j}$ and $A v_j=0$ implies $A=0$ for
      $j=1, \dots, s_1$.

\smallskip

\item\label{it.2} $\mathfrak{n}_{1,j}v_i= 0$ for
      $1 \leq i < j \leq s_1$.
\smallskip

\item\label{it.3} $\mathfrak{n}_{k,j}V \subseteq \mathfrak{n}_{k,i}v_i$ for $1 \leq i < j \leq s_k$ and $k=1,2$.

\smallskip

\item\label{it.4} If in addition ${\mathfrak{n}}_1$ consists of nilpotent operators and $[\mathfrak{n}_1,\mathfrak{n}_2]=0$
then
      ${\mathfrak{n}}_{{1},1} v_1 \cap \operatorname{span}_{\k}\{v_1, \dots, v_{s_2}\} = 0$.
\end{enumerate}
\end{theorem}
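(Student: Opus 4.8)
\emph{Overview.} The plan is to prove items \ref{it.1}--\ref{it.3} by induction on $\dim\mathfrak{n}_1$, splitting off the first summand $\mathfrak{n}_{1,1}$ (together with $\mathfrak{n}_{2,1}$) at each step, and then to deduce item \ref{it.4} from \ref{it.1}--\ref{it.3} using the two extra hypotheses. For the inductive step I first fix the vector $v_1$. Since $\k$ is infinite, for $k=1,2$ the function $v\mapsto\dim\mathfrak{n}_kv$ attains its maximum, say $d_k$, on a nonempty Zariski-open $\Omega_k\subseteq V$; pick $v_1\in\Omega_1\cap\Omega_2$, nonempty as a finite intersection of dense opens of the irreducible variety $V$, and note $\mathfrak{n}_2v_1\ne0$ because $\mathfrak{n}_2\ne0$ forces $d_2\ge1$. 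Put $K_1=\{A\in\mathfrak{n}_1:Av_1=0\}$, choose $\mathfrak{n}_{2,1}$ to be a complement of $K_1\cap\mathfrak{n}_2$ in $\mathfrak{n}_2$ (nonzero, since $\mathfrak{n}_2v_1\ne0$ gives $K_1\cap\mathfrak{n}_2\subsetneq\mathfrak{n}_2$), and extend it to a complement $\mathfrak{n}_{1,1}$ of $K_1$ in $\mathfrak{n}_1$ (possible, as $\mathfrak{n}_{2,1}\cap K_1=\mathfrak{n}_{2,1}\cap(K_1\cap\mathfrak{n}_2)=0$). With $\mathfrak{n}_1'=K_1$, $\mathfrak{n}_2'=K_1\cap\mathfrak{n}_2$ we get $\mathfrak{n}_1=\mathfrak{n}_{1,1}\oplus\mathfrak{n}_1'$, $\mathfrak{n}_2=\mathfrak{n}_{2,1}\oplus\mathfrak{n}_2'$, and $\dim\mathfrak{n}_1'=\dim\mathfrak{n}_1-d_1<\dim\mathfrak{n}_1$ since $d_1\ge1$. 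If $\mathfrak{n}_2'\ne0$ I apply the inductive hypothesis to the chain $\mathfrak{n}_2'\subseteq\mathfrak{n}_1'$ to produce $v_2,\dots,v_{s_1}$ and the summands $\mathfrak{n}_{k,j}$, $j\ge2$; if $\mathfrak{n}_2'=0$ I set $s_2=1$ and, when $\mathfrak{n}_1'\ne0$, decompose $\mathfrak{n}_1'$ alone by running the same argument on the chain $\mathfrak{n}_1'\subseteq\mathfrak{n}_1'$ and discarding its second row; the terminal case $\mathfrak{n}_1'=0$ gives $s_1=s_2=1$. In every case this yields $s_1\ge s_2>0$.

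\emph{The key claim.} The whole construction rests on the following statement: if $v\in V$ maximizes $\dim\mathfrak{n}\,v$ over an arbitrary $0\ne\mathfrak{n}\subseteq\End(V)$, then $AV\subseteq\mathfrak{n}v$ for every $A\in\mathfrak{n}$ with $Av=0$. To prove it, set $d=\dim\mathfrak{n}v$ and pick $B_1,\dots,B_d\in\mathfrak{n}$ with $\{B_iv\}$ a basis of $\mathfrak{n}v$. Given $A\in\mathfrak{n}$ with $Av=0$, an arbitrary $v_0\in V$, and any $t\in\k$, the $d+1$ vectors $B_1(v+tv_0),\dots,B_d(v+tv_0),A(v+tv_0)$ lie in $\mathfrak{n}(v+tv_0)$, a space of dimension $\le d$; hence their exterior product vanishes, and using $A(v+tv_0)=tAv_0$ this reads $t\bigl(B_1(v+tv_0)\wedge\cdots\wedge B_d(v+tv_0)\wedge Av_0\bigr)=0$ in $\bigwedge^{d+1}V$ for all $t$. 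As $\k$ is infinite, the polynomial in $t$ given by $B_1(v+tv_0)\wedge\cdots\wedge B_d(v+tv_0)\wedge Av_0$ vanishes for all $t\ne0$, hence identically; evaluating at $t=0$ gives $B_1v\wedge\cdots\wedge B_dv\wedge Av_0=0$, and since $B_1v,\dots,B_dv$ are linearly independent this forces $Av_0\in\operatorname{span}_\k\{B_1v,\dots,B_dv\}=\mathfrak{n}v$.

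\emph{Putting it together.} Applying the key claim to $\mathfrak{n}_1$ and to $\mathfrak{n}_2$ (this is where $v_1\in\Omega_2$ is needed) gives $K_1V\subseteq\mathfrak{n}_1v_1=\mathfrak{n}_{1,1}v_1$ and $(K_1\cap\mathfrak{n}_2)V\subseteq\mathfrak{n}_2v_1=\mathfrak{n}_{2,1}v_1$, using $K_1v_1=0$ for the equalities. Then \ref{it.1} for $j=1$ is $\mathfrak{n}_{1,1}\cap K_1=0$; \ref{it.2} for $i=1<j$ holds because $\mathfrak{n}_{1,j}\subseteq\mathfrak{n}_1'=K_1$ kills $v_1$; and \ref{it.3} for $i=1$ is $\mathfrak{n}_{k,j}V\subseteq(K_1\cap\mathfrak{n}_k)V\subseteq\mathfrak{n}_{k,1}v_1$ for $j\ge2$; all remaining instances of \ref{it.1}--\ref{it.3} are the inductive hypothesis for $\mathfrak{n}_2'\subseteq\mathfrak{n}_1'$. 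Linear independence of $\{v_1,\dots,v_{s_1}\}$ is then automatic from \ref{it.1}--\ref{it.2}: if $\sum c_iv_i=0$ and $i_0$ is the largest index with $c_{i_0}\ne0$, choose $A\in\mathfrak{n}_{1,i_0}$ with $Av_{i_0}\ne0$; then $0=A\sum c_iv_i=c_{i_0}Av_{i_0}\ne0$, a contradiction. For \ref{it.4}, assume $\mathfrak{n}_1$ consists of nilpotents and $[\mathfrak{n}_1,\mathfrak{n}_2]=0$ (both hypotheses pass to $\mathfrak{n}_1'$, $\mathfrak{n}_2'$, so it suffices to treat the top level). Suppose $\sum_{i=1}^{s_2}c_iv_i=Av_1$ with $A\in\mathfrak{n}_{1,1}$, and let $\ell$ be the largest index with $c_\ell\ne0$. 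If $\ell\ge2$, take $0\ne B\in\mathfrak{n}_{2,\ell}$: by \ref{it.1} $Bv_\ell\ne0$, by \ref{it.2} $Bv_i=0$ for $i<\ell$ and $Bv_1=0$, and since $B\in\mathfrak{n}_2$ commutes with $A\in\mathfrak{n}_1$,
\[
c_\ell\,Bv_\ell=B\Bigl(\textstyle\sum_i c_iv_i\Bigr)=BAv_1=ABv_1=0,
\]
a contradiction; hence $\ell\le1$, and if $\ell=1$ then $c_1v_1=Av_1$ with $A$ nilpotent and $v_1\ne0$, impossible. So all $c_i=0$.

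\emph{Main obstacle.} The crux is the key claim, as it is the one point where genericity of $v_1$ is exploited; fortunately it has the short exterior-algebra proof above. The remaining delicate points are administrative: arranging the splitting compatibly with $\mathfrak{n}_2$, ensuring $s_1\ge s_2$ across the recursion, and observing that \ref{it.3} for $k=2$ requires $v_1$ to maximize $\dim\mathfrak{n}_2v$ as well, which is why $v_1$ is chosen in $\Omega_1\cap\Omega_2$.
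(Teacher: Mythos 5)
Your proof is correct. Note, however, that the paper does not actually prove Theorem \ref{teo:descomposicion}: it is quoted as a particular instance of Theorem 2.3 of \cite{CR2}, which treats a chain of subspaces of arbitrary length, so what you have produced is a self-contained proof of the two-term case that this paper needs. Your argument checks out at every step: the choice of $v_1$ maximizing both $\dim\mathfrak{n}_1v$ and $\dim\mathfrak{n}_2v$ (legitimate, since nonempty Zariski-open subsets of $V$ intersect over the infinite field $\k$), the exterior-algebra genericity lemma (if $v$ is rank-maximizing for $\mathfrak{n}$ and $Av=0$ with $A\in\mathfrak{n}$, then $AV\subseteq\mathfrak{n}v$), the splitting $\mathfrak{n}_1=\mathfrak{n}_{1,1}\oplus K_1$ chosen compatibly with $\mathfrak{n}_2$, the recursion on $K_1\supseteq K_1\cap\mathfrak{n}_2$, the deduction of linear independence of $\{v_1,\dots,v_{s_1}\}$ from items \eqref{it.1}--\eqref{it.2}, and the commutator-plus-nilpotency argument for item \eqref{it.4}. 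That genericity lemma is precisely the engine behind the cited general result, so in spirit your route parallels the source the authors rely on; the difference is that you argue directly for the chain $\mathfrak{n}_2\subset\mathfrak{n}_1$ rather than invoking (or reproving) the full flag version, which is entirely adequate here. Two cosmetic points: in the degenerate case $\mathfrak{n}_2'=0$, $\mathfrak{n}_1'\ne0$ you apply the induction hypothesis to the chain $\mathfrak{n}_1'\subseteq\mathfrak{n}_1'$, so your induction statement should be phrased for non-strict containment (harmless, as your construction never uses $\mathfrak{n}_2\ne\mathfrak{n}_1$); and the remark that the hypotheses of \eqref{it.4} pass to $\mathfrak{n}_1',\mathfrak{n}_2'$ is unnecessary, since \eqref{it.4} concerns only the top-level block and your argument uses only \eqref{it.1}, \eqref{it.2}, nilpotency of elements of $\mathfrak{n}_{1,1}$, and $[\mathfrak{n}_{1,1},\mathfrak{n}_{2,\ell}]=0$.
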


\begin{remark}\label{rmk.dimension}
 It follows from \eqref{it.1} that
 $\dim\mathfrak{n}_{k,j}=\dim(\mathfrak{n}_{k,j}v_j)$
 for  $1 \leq j \leq s_k$ and $k=1,2$.
 This, combined with \eqref{it.3}, implies
 $\dim\mathfrak{n}_{k,j}\le \dim(\mathfrak{n}_{k,i})$
 for  $1 \leq i<j \leq s_k$ and $k=1,2$.
\end{remark}

\begin{remark}\label{rmk.induction}
This theorem is useful for us since
it allows to argue inductively as follows.
In the context of  Theorem \ref{teo:descomposicion},
let us assume that $\mathfrak{n}_{1}$ is a Lie subalgebra of
$\mathfrak{gl}(V)$.
For any $2\le j_0\le s_2$, let
\[
\tilde{\mathfrak{n}}=\bigoplus_{j=j_0}^{s_1} \mathfrak{n}_{1,j} \qquad
\text{ and }\qquad
V'=\text{span}_{\k}\big(
   \mathfrak{n}_{1,1}v_1 \cup \{v_{j_0},\dots,v_{s_2}\}\big).
\]
We claim that
\begin{enumerate}[(i)]
\item $\tilde{\mathfrak{n}}$ is a Lie subalgebra of $\mathfrak{n}_1$
and it preserves $V'$.

\medskip

\item \label{coro ii}
If $\tilde{\mathfrak{n}}$ is nilpotent with center
$\mathfrak{z}(\tilde{\mathfrak{n}})$
contained in $\mathfrak{n}_2$ then
      $\tilde{\mathfrak{n}}$ acts on $V'$ faithfully.
\end{enumerate}

First we prove (i). Since $j_0\ge 2$,
item \eqref{it.3} implies that $V'$ is
preserved by $\tilde{\mathfrak{n}}$.
Let us show that $\tilde{\mathfrak{n}}$ is a Lie subalgebra of $\mathfrak{n}_1$.
Given $X, Y \in \tilde{\mathfrak{n}}$,
 since $\mathfrak{n}_1$ is a Lie subalgebra of
 $\mathfrak{gl}(V)$ we have
\[
[X, Y]= \sum_{i=1}^{s_1} X_i\quad    \text{ with }\quad
X_i \in \mathfrak{n}_{1, i}.
\]
We must show that $X_i=0$ for all $i<j_0$.
Let $j_1$ be the least $i<j_0$ with $X_{j_1}\ne0$ (if there is any).
Then
\begin{align*}
0 & = XY(v_{j_1})-YX(v_{j_1}) \quad\text{ by definition of
$\tilde{\mathfrak{n}}$ and \eqref{it.2}} \\
  &= [X, Y](v_{j_1}) \\
  & = \sum_{i=1}^{s_1} X_i(v_{j_1})    \quad\text{ } X_i  \in \mathfrak{n}_{1,i}\\
  & = X_{j_1}(v_{j_1}) \quad\text{ by \eqref{it.2}}
\end{align*}
which is a contradiction to \eqref{it.1}.

We now prove (ii).
Since $\tilde{\mathfrak{n}}$ is nilpotent it suffices to
see that its center acts faithfully on $V'$.
It follows from item \eqref{it.1} that
$\bigoplus_{j=j_0}^{s_2} \mathfrak{n}_{2,j}$
acts faithfully on $V'$ since
$\{v_{j_0},\dots,v_{s_2}\}\subset V'$.
Therefore, $\mathfrak{z}(\tilde{\mathfrak{n}})
\subset\mathfrak{n}_2\cap \tilde{\mathfrak{n}}$  acts faithfully on $V'$.
This concludes the remark.
\end{remark}

Let us
fix  a faithful nilrepresentation
$(\pi, V)$ of
$\mathcal{L}_{r,2}$. Let
\[
 \mathfrak{n}_1= \pi(\mathcal{L}_{r,2})\qquad\text{ and }\qquad
 \mathfrak{n}_2= \pi(\mathfrak{z}(\mathcal{L}_{r,2})).
\]
It is clear that $\mathfrak{n}_1$
is a nilpotent Lie subalgebra of
$\mathfrak{gl}(V)$ consisting of nilpotent endomorphisms
which is isomorphic to $\mathcal{L}_{r,2}$.
Thus
\[
 \dim \n_1=r+\frac{r(r-1)}2\qquad\text{ and }\qquad \dim\n_2=\frac{r(r-1)}2
\]

Now the subspaces $\n_{k,i}$ obtained by applying
Theorem \ref{teo:descomposicion} to the chain
${\mathfrak{n}}_{2} \subset {\mathfrak{n}}_{1}$ in this
particular case have some additional properties.

\begin{proposition}\label{teoestruc}
Let
$(\pi, V)$, $\mathfrak{n}_1$ and $\mathfrak{n}_2$ as above,
and let
$s_1\ge s_2>0$, $\{v_1, \dots, v_{s_1}\} \subset V$, and
$\mathfrak{n}_{k,j}\subset \End(V)$
($1\le j\le s_k$, $k= 1, 2$),
be the output obtained by
applying
Theorem \ref{teo:descomposicion} to the chain
${\mathfrak{n}}_{2} \subset {\mathfrak{n}}_{1}$.
Then
$$
\dim \left(\mathfrak{n}_{1,2} \oplus \dots \oplus \mathfrak{n}_{1,s_1} \right) \leq \dim \left(\mathfrak{n}_{2,2} \oplus \dots \oplus \mathfrak{n}_{2,s_2} \right) + r-1.
$$
In the case
\[
\dim \left(\mathfrak{n}_{1,2} \oplus \dots \oplus \mathfrak{n}_{1,s_1} \right) = \dim \left(\mathfrak{n}_{2,2} \oplus \dots \oplus \mathfrak{n}_{2,s_2} \right) + r-1,
\]
there is a Lie subalgebra
$\mathfrak{m}\subset\mathcal{L}_{r,2}$, with
$\mathfrak{m}\simeq \mathcal{L}_{r-1,2}$, such that
$(\pi,V)$ contains a faithful subrepresentation of
$\mathfrak{m}$ of type $(\dim \mathfrak{n}_{1,1}-1,1,s_2-1)$.
In particular
\[
 \dim V \geq
 \begin{cases}
  2r-2 &\text{ for } r\geq 6 \text{ and }\\
 2r-1,& \text{for $r=4, 5$,}
 \end{cases}
\]
and thus \eqref{eq.lower_bound} holds for $r=4,5$.
\end{proposition}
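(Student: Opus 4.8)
The plan is to work entirely within the structure produced by Theorem~\ref{teo:descomposicion} applied to $\mathfrak{n}_2\subset\mathfrak{n}_1$, using the dimension bookkeeping of Remark~\ref{rmk.dimension}. First I would compare the two decompositions term by term: for $2\le j\le s_2$ the inclusion $\mathfrak{n}_{2,j}\subset\mathfrak{n}_{1,j}$ gives $\dim\mathfrak{n}_{2,j}\le\dim\mathfrak{n}_{1,j}$, and for $s_2<j\le s_1$ there is simply no $\mathfrak{n}_{2,j}$ term to compare against. So the inequality
\[
\dim\!\big(\mathfrak{n}_{1,2}\oplus\dots\oplus\mathfrak{n}_{1,s_1}\big)
\le \dim\!\big(\mathfrak{n}_{2,2}\oplus\dots\oplus\mathfrak{n}_{2,s_2}\big)+r-1
\]
is really a statement that the total "excess" $\sum_{j\ge2}(\dim\mathfrak{n}_{1,j}-\dim\mathfrak{n}_{2,j})$ (with the convention $\mathfrak{n}_{2,j}=0$ for $j>s_2$) is at most $r-1$. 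The natural way to see this is: comparing $j=1$ terms, $\dim\mathfrak{n}_{1,1}-\dim\mathfrak{n}_{2,1}\ge0$, and summing over all $j$, $\sum_{j}(\dim\mathfrak{n}_{1,j}-\dim\mathfrak{n}_{2,j})=\dim\mathfrak{n}_1-\dim\mathfrak{n}_2=r$. Hence the excess over $j\ge2$ equals $r-(\dim\mathfrak{n}_{1,1}-\dim\mathfrak{n}_{2,1})$, and the inequality is equivalent to $\dim\mathfrak{n}_{1,1}-\dim\mathfrak{n}_{2,1}\ge1$. This last fact should follow because $\mathfrak{n}_{1,1}$ "contains a generator direction": the image of the generating subspace $\k^r$ cannot lie entirely in $\mathfrak{n}_2=\pi(\mathfrak{z})$, and item~\eqref{it.1} forces the relevant piece of $\mathfrak{n}_{1,1}$ to be at least one dimension bigger than $\mathfrak{n}_{2,1}$. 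The main obstacle here is making this "$\ge1$" rigorous; I expect one argues via item~\eqref{it.4}, which in the nilpotent commuting case gives $\mathfrak{n}_{1,1}v_1\cap\operatorname{span}_\k\{v_1,\dots,v_{s_2}\}=0$, so $\mathfrak{n}_{1,1}v_1$ contributes dimensions not seen among the $v_i$'s, and this surplus is exactly accounted for by the $r$ generators.

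Next, for the equality case, I would identify the subalgebra $\mathfrak{m}$. Equality in the chain of inequalities above forces $\dim\mathfrak{n}_{1,1}-\dim\mathfrak{n}_{2,1}=1$ and $\dim\mathfrak{n}_{1,j}=\dim\mathfrak{n}_{2,j}$ for all $2\le j\le s_2$ and $\mathfrak{n}_{1,j}=0$ for $j>s_2$, i.e.\ $s_1=s_2$ and $\mathfrak{n}_{1,j}=\mathfrak{n}_{2,j}$ (both being nonzero of equal dimension with one contained in the other) for $j\ge2$. Then I would apply the inductive mechanism of Remark~\ref{rmk.induction} with $j_0=2$: set $\tilde{\mathfrak{n}}=\bigoplus_{j=2}^{s_1}\mathfrak{n}_{1,j}$ and $V'=\operatorname{span}_\k(\mathfrak{n}_{1,1}v_1\cup\{v_2,\dots,v_{s_2}\})$. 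By (i) of that remark $\tilde{\mathfrak{n}}$ is a Lie subalgebra preserving $V'$; since $\tilde{\mathfrak{n}}=\bigoplus_{j\ge2}\mathfrak{n}_{2,j}\subset\mathfrak{n}_2$ it is abelian, hence nilpotent with center equal to itself, which is in $\mathfrak{n}_2$, so (ii) gives that $\tilde{\mathfrak{n}}$ acts faithfully on $V'$. But wait—this only produces a faithful abelian piece; to get $\mathcal{L}_{r-1,2}$ I must instead use a slightly larger subalgebra. The right choice is $\tilde{\mathfrak{n}}'=\mathfrak{n}_{1,1}\oplus\tilde{\mathfrak{n}}$ intersected with $\pi(\mathcal{L}_{r,2})$, and then invoke Proposition~\ref{prop.basic}: the preimage $\pi^{-1}(\tilde{\mathfrak{n}}')$ is a subalgebra of $\mathcal{L}_{r,2}$ whose quotient by its intersection with $\bigwedge^2\k^r$ has dimension $\dim\mathfrak{n}_{1,1}-\dim\mathfrak{n}_{2,1}=1$ less than $r$—actually one needs this quotient to have the right rank, which Proposition~\ref{prop.basic} organizes as $\mathfrak{a}_1\oplus\mathfrak{a}_2$ with $\mathfrak{a}_1\simeq\mathcal{L}_{k,2}$; the dimension count forces $k=r-1$, giving $\mathfrak{m}$.

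Finally I would verify the type and the dimension bound. By construction the action of $\mathfrak{m}$ on $V'$ is upper-block-triangular with respect to the flag $\mathfrak{n}_{1,1}v_1\subset V'$: the abelian part $\tilde{\mathfrak{n}}$ lands in the top-right block (items~\eqref{it.2},~\eqref{it.3}) while the one "generator direction" in $\mathfrak{n}_{1,1}$ maps $\{v_2,\dots,v_{s_2}\}$ into $\mathfrak{n}_{1,1}v_1$—this is exactly the type $(\dim\mathfrak{n}_{1,1}-1,\,1,\,s_2-1)$ picture of Definition~\ref{def1}, with $a=\dim\mathfrak{n}_{1,1}-1$ (dropping the $v_1$-direction which the generator hits), $p=1$, $b=s_2-1$. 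Faithfulness of this subrepresentation comes from (ii) of Remark~\ref{rmk.induction} together with item~\eqref{it.1} for $j=1$. Now apply Proposition~\ref{prop.type} to this faithful representation of $\mathfrak{m}\simeq\mathcal{L}_{r-1,2}$ of type $(a,1,b)$: it gives $\dim V'\ge 2(r-1)-1=2r-3$ and hence $\dim V\ge\dim V'\ge 2r-3$; but with a little more care—noting $V$ strictly contains $V'$ since $v_1\in V\setminus V'$ (by item~\eqref{it.4}, $v_1\notin\operatorname{span}\{v_1,\dots,v_{s_2}\}$ is false, so one instead argues $\dim V\ge\dim V'+(\,\text{rows/columns not in }V'\,)$)—one upgrades to $\dim V\ge 2r-2$. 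For $r=4,5$ the already-established general lower bound $\dim V\ge\mu(\mathfrak{z}(\mathcal{L}_{r,2}))=\lceil\sqrt{2r(r-1)-1}\rceil$ yields $\dim V\ge 5,\,8$ respectively when combined with the parity/size constraints coming from the two cases (equality vs.\ strict inequality in the Proposition), pinning down $2r-1$; I would treat these two small ranks by a direct case split on whether the displayed inequality is strict or not, using the strict case to get $\dim V\ge 2r-1$ outright. The main obstacle throughout is the equality-case analysis—extracting precisely a copy of $\mathcal{L}_{r-1,2}$ of the stated type rather than merely an abelian or lower-rank subalgebra—and this is where Proposition~\ref{prop.basic} and careful tracking of which basis vectors survive into $V'$ do the real work.
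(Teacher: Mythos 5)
Your overall skeleton (use Remark~\ref{rmk.induction}, Proposition~\ref{prop.basic}, Proposition~\ref{prop.type}, and item~\eqref{it.4} to get a faithful type-$(a,1,b)$ subrepresentation on $V'$ and then $\dim V\ge\dim V'+1$) is the right one, but your equality-case analysis is wrong at its core. Equality in the displayed inequality does \emph{not} force $\mathfrak{n}_{1,j}=\mathfrak{n}_{2,j}$ for $j\ge2$ and $s_1=s_2$: that would make $\dim\mathfrak{n}_1-\dim\mathfrak{n}_2=\dim\mathfrak{n}_{1,1}-\dim\mathfrak{n}_{2,1}=1$, contradicting the fact that this difference is $r$. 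What equality actually says is that a linear complement $\mathfrak{m}_0$ of $\mathfrak{n}_{2,2}\oplus\dots\oplus\mathfrak{n}_{2,s_2}$ inside $\mathfrak{n}_{1,2}\oplus\dots\oplus\mathfrak{n}_{1,s_1}$ has dimension exactly $r-1$ (and that $\dim\mathfrak{n}_{1,1}=\dim\mathfrak{n}_{2,1}+1$). The correct construction takes $\mathfrak{m}$ to be the subalgebra generated by $\mathfrak{m}_0$; Proposition~\ref{prop.basic} gives $\mathfrak{m}\simeq\mathcal{L}_{r-1,2}$, and the crucial point is that $\mathfrak{m}\subset\mathfrak{n}_{1,2}\oplus\dots\oplus\mathfrak{n}_{1,s_1}$, so items~\eqref{it.2} and~\eqref{it.3} give $\mathfrak{m}v_1=0$ and $\mathfrak{m}V\subset\mathfrak{n}_{1,1}v_1$, which is exactly what produces the block-triangular shape. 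Your repair, $\tilde{\mathfrak{n}}'=\mathfrak{n}_{1,1}\oplus\tilde{\mathfrak{n}}$, is all of $\mathfrak{n}_1$ (its preimage is the whole $\mathcal{L}_{r,2}$, not a rank-$(r-1)$ subalgebra), and any subalgebra meeting $\mathfrak{n}_{1,1}$ nontrivially cannot play the role of $\mathfrak{m}$, since by item~\eqref{it.1} its elements do not annihilate $v_1$ and need not map $V$ into $\mathfrak{n}_{1,1}v_1$. Relatedly, the middle block of size $1$ in the type $(\dim\mathfrak{n}_{1,1}-1,1,s_2-1)$ is not a ``generator direction inside $\mathfrak{m}$'' acting: it is the line $\k A v_1\subset V'$, where $A$ spans a complement of $\mathfrak{n}_{2,1}$ in $\mathfrak{n}_{1,1}$; one then checks $\mathfrak{m}\cdot\operatorname{span}\{v_2,\dots,v_{s_2}\}\subset\mathfrak{n}_{1,1}v_1$, $\mathfrak{m}\cdot Av_1\subset\mathfrak{n}_{2,1}v_1$ (via $XAv_1=[X,A]v_1$), and $\mathfrak{m}\cdot\mathfrak{n}_{2,1}v_1=0$. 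You also skip the needed verification that $s_2\ge2$ in the equality case.

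The second genuine gap is the passage from $2r-2$ to $2r-1$ for $r=4,5$. Your plan — the abelian-center bound plus ``parity/size constraints'' — cannot work: for $r=5$ that bound is $\lceil\sqrt{39}\rceil=7$ (not $8$ as you state), and the equality-case argument only yields $\dim V\ge 2r-2=8$, so the value $\dim V=8$ must be excluded by a separate argument. The paper does this by a lengthy explicit analysis: in the equality case with $\dim V=8$ one is forced into $n_{1,1}=s_2=4$, a basis $B_1\cup B_2\cup B_3\cup\{v_1\}$ of $V$, and after normalizing generators and basis vectors step by step, the nilpotency of $\pi$ and the bracket relations force $[X_4,X_5]=0$, a contradiction (with a similar, easier computation for $r=4$). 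Nothing in your sketch substitutes for this computation, and the strict-inequality case (which you invoke to ``get $2r-1$ outright'') is not part of this Proposition's hypotheses but of the counting argument in Corollary~\ref{TeoCotaInferior}; even granting it, the equality case for $r=4,5$ remains open in your proposal.
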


\begin{proof}
It is clar that
\begin{align*}
\dim \left(\mathfrak{n}_{1,2} \oplus \dots \oplus \mathfrak{n}_{1,s_1} \right) -
\dim &\left(\mathfrak{n}_{2,2} \oplus \dots \oplus \mathfrak{n}_{2,s_2} \right) \\
&<
\dim \left(\mathfrak{n}_{1,1} \oplus \dots \oplus \mathfrak{n}_{1,s_1} \right) -
\dim \left(\mathfrak{n}_{2,1} \oplus \dots \oplus \mathfrak{n}_{2,s_2} \right) \\
&=r
\end{align*}
and thus we have the first part of the proposition.

We now assume that
\begin{equation}\label{eq.equal_r-1}
 \dim \left(\mathfrak{n}_{1,2} \oplus \dots \oplus \mathfrak{n}_{1,s_1} \right) = \dim \left(\mathfrak{n}_{2,2} \oplus \dots \oplus \mathfrak{n}_{2,s_2} \right) + r-1,
\end{equation}
and in particular $s_1\ge 2$. Also $s_2 \geq 2$, otherwise
$s_2= 1$ and it follows, from Remark \ref{rmk.induction}, that
$\mathfrak{n}_{1,2} \oplus \dots \oplus \mathfrak{n}_{1,s_1}$ is an abelian Lie subalgebra with  $\left(\mathfrak{n}_{1,2} \oplus \dots \oplus \mathfrak{n}_{1,s_1} \right) \cap \mathfrak{n}_2 = 0$.
Since $\mathfrak{n}_{1}\simeq
\mathcal{L}_{r,2}$, it follows (see
Proposition  \ref{prop.basic}) that
$\dim \left(\mathfrak{n}_{1,2} \oplus \dots \oplus \mathfrak{n}_{1,s_1} \right) = 1$
and hence $r=2$ a contradiction (recall that $r\ge 4$).

From now on we assume $s_1, s_2 \geq 2$.
Let $\mathfrak{m}_0$ be a linear complement of $\mathfrak{n}_{2,2} \oplus \dots \oplus \mathfrak{n}_{2,s_2}$ in
$\mathfrak{n}_{1,2} \oplus \dots \oplus \mathfrak{n}_{1,s_1}$, that is
$$
\mathfrak{n}_{1,2} \oplus \dots \oplus \mathfrak{n}_{1,s_1}=
\mathfrak{m}_0 \; \oplus \;
\mathfrak{n}_{2,2} \oplus \dots \oplus \mathfrak{n}_{2,s_2},
$$
and let $\mathfrak{m}$
be the Lie subalgebra of $\mathfrak{n}_1$ generated $\mathfrak{m}_0$.
Since  $\dim \mathfrak{m}_0=r-1$,
Proposition  \ref{prop.basic}) implies $\mathfrak{m}\simeq\mathcal{L}_{r-1,2}$
and, it follows from Theorem \ref{teo:descomposicion}
item \eqref{it.3} that
\begin{equation}\label{eq.mV}
\mathfrak{m}V\subset \mathfrak{n}_{1,1}v_1.
\end{equation}

From Remark \ref{rmk.induction} we know that
$\mathfrak{n}_{1,2} \oplus \dots \oplus \mathfrak{n}_{1,s_1}$ is a Lie algebra,
containing $\mathfrak{m}\simeq\mathcal{L}_{r-1,2}$, acting faithfully on
\[
 V'=\text{span}_{\k}\big(
   \mathfrak{n}_{1,1}v_1 \cup \{v_{2},\dots,v_{s_2}\}\big).
\]
We claim that the faithful representation of
$\mathcal{L}_{r-1,2}$ given by the action of
$\mathfrak{m}$ on $V'$ is of
type $(n_{1,1}-1,1,s_2-1)$.

Let $n_{i,1}=\dim \mathfrak{n}_{i,1}$, $i=1,2$.
It follows from \eqref{eq.equal_r-1} that $n_{1,1}=1+n_{2,1}$.
Let $\{A_1,\dots,A_{n_{2,1}},A_{n_{1,1}}\}$ be a basis of
$\mathfrak{n}_{1,1}$ with
$A_i\in  \mathfrak{n}_{2,1}$ for $i=1,\dots, n_{2,1}$.
Let
\begin{align*}
  B_1 & = \{A_1v_1,\dots,A_{n_{2,1}}v_1\}, \\
  B_2 & = \{A_{n_{1,1}}v_1\}, \\
  B_3 & = \{v_{2},\dots,v_{s_2}\},
 \end{align*}
and let $V'_i=\text{span}_{\k}(B_i)$, $i=1,2,3$.
It follows from Theorem \ref{teo:descomposicion}
item \eqref{it.1} and \eqref{it.4}
that $B_1\cup B_2 \cup B_3$ is linearly independent and thus
\[
V'=V'_1\oplus V'_2\oplus V'_3.
\]
In order to show that
this representation of $\mathcal{L}_{r-1,2}$ is of
type $(n_{1,1}-1,1,s_2-1)$ we must show that,
given $A\in\mathfrak{m}$, we have
\begin{enumerate}[(i)]
 \item
$Av\in \text{span}_{\k}(B_1\cup B_2)$
for all $v\in V'$,
 \item
$AA_{n_{1,1}}v_1\in \text{span}_{\k}(B_1)$, and
\item
$Av=0$
for all $v\in \text{span}_{\k}(B_1)$.
\end{enumerate}

Property  (i) follows from \eqref{eq.mV}.

Property  (ii) is obtained as follows
\begin{align*}
  AA_{n_{1,1}}v_1 & = A_{n_{1,1}}Av_1+[A,A_{n_{1,1}}]v_1 \\
                 & = [A,A_{n_{1,1}}]v_1 \qquad\qquad\text{by Theorem \ref{teo:descomposicion} item \eqref{it.2}}\\
   & \in   \mathfrak{n}_{2}v_1 \\
   & \subset   \mathfrak{n}_{2,1}v_1 =V'_1\qquad\qquad\text{by Theorem \ref{teo:descomposicion} item \eqref{it.2}.}
 \end{align*}
Finally, Theorem \ref{teo:descomposicion} item \eqref{it.2} implies
$AA_{j}v_1=A_{j}Av_1=0$ for $j=1,\dots,n_{2,1}$,  and this proves (iii).

This shows that the action of $\mathfrak{m}$ on $V'$ corresponds
to a faithful representation  of $\mathcal{L}_{r-1,2}$ of type
$(n_{1,1}-1,1,s_2-1)$, and hence, by Proposition \ref{prop.type}, we obtain
that $\dim V'\ge 2r-3$.
But $\dim V> \dim V'$ since
$v_1\not\in V'$ (see Theorem \ref{teo:descomposicion}  item \eqref{it.4}),
and hence $\dim V>2r-2$.
This completes the proof for $r\ge 6$.

Now assume $r= 5$,
in this case,
$\mathfrak{m}\simeq\mathcal{L}_{4,2}$,
we have proved $\dim V\ge 8$,  $\dim V'\ge 7$ and
we must prove that $\dim V\ge 9$.

Assume, if possible that $\dim V= 8$.
This implies
\[
 \dim V=\dim V'+1,\qquad \dim V'=n_{1,1} + s_2 - 1=7.
\]
In particular
$B=B_1\cup B_2 \cup B_3\cup\{v_1\}$
 is a basis of $V$.

 Since $V'$ is a faithful representation of type
$(n_{1,1}-1,1,s_2-1)$ of $\mathfrak{m}$,
Proposition \ref{prop.type} implies
\[
(n_{1,1}-1)(s_2-1) \geq 6\qquad\text{ and }\qquad
n_{1,1}-1, s_2 - 1 \geq 3.
\]
Thus, the only possibility is $n_{1,1}= s_2 =4$.
That is, both $B_1$ and $B_3$, have 3 elements,
that is
\[
B= \{A_1v_1,A_2v_1,A_3v_1\}
  \cup \{A_4v_1\}  \cup \{v_{2},v_{3},v_{4}\}\cup\{v_1\}.
\]

Let $\{X_1, \dots,X_5\}$ be any set of generators of $\mathfrak{n}_1$
with $\{X_1, \dots, X_4\}\subset\mathfrak{m}$ and $X_5=A_{4}$.
If we denote by $\tilde X$ the matrix corresponding to the action of $X$ on $V$
associated to the basis $B$,
we know that
\[
 \tilde X_1,\dots,\tilde X_4\!=\!
 \left(\tiny
 \begin{array}{ccc|c|ccc|c}
  0 & 0 & 0 &  *  & * & * & * &   0 \\
  0 & 0 & 0 &  *  & * & * & * &   0 \\
  0 & 0 & 0 &  *  & * & * & * &   0 \\
  \hline
  0 & 0 & 0 &  0  & * & * & * &   0 \\
  \hline
  0 & 0 & 0 &  0  & 0 & 0 & 0 &   0 \\
  0 & 0 & 0 &  0  & 0 & 0 & 0 &   0 \\
  0 & 0 & 0 &  0  & 0 & 0 & 0 &   0 \\
  \hline
  0 & 0 & 0 &  0  & 0 & 0 & 0 &   0 \\
 \end{array}
 \right)\!,\;
 \tilde X_5\!=\!
  \left(\tiny
 \begin{array}{ccc|c|ccc|c}
  * & * & * &  *  & * & * & * &   0 \\
  * & * & * &  *  & * & * & * &   0 \\
  * & * & * &  *  & * & * & * &   0 \\
  \hline
  * & * & * &  *  & * & * & * &   1 \\
  \hline
  * & * & * &  *  & * & * & * &   0 \\
  * & * & * &  *  & * & * & * &   0 \\
  * & * & * &  *  & * & * & * &   0 \\
  \hline
  * & * & * &  *  & * & * & * &   0 \\
 \end{array}
 \right).
\]
By definition of $B_1$, we know that
$\text{span}_{\k}\{X_jA_4v_1:j=1,\dots,4\}=\text{span}_{\k} (B_1)$.
Therefore, after a Gaussian elimination process, we
can redefine $\{X_1, \dots, X_5\}$ so that (we do not change
$B_1$, $B_2$, but we may need to permute $B_3$)
\begin{equation}\label{eq.e_i}
 \tilde X_i =
 \left(\tiny
 \begin{array}{ccc|c|ccc|c}
  0 & 0 & 0 &    & * & * & * &   0 \\
  0 & 0 & 0 &  e_i  & * & * & * &   0 \\
  0 & 0 & 0 &    & * & * & * &   0 \\
  \hline
  0 & 0 & 0 &  0  & 0 & * & * &   0 \\
  \hline
  0 & 0 & 0 &  0  & 0 & 0 & 0 &   0 \\
  0 & 0 & 0 &  0  & 0 & 0 & 0 &   0 \\
  0 & 0 & 0 &  0  & 0 & 0 & 0 &   0 \\
  \hline
  0 & 0 & 0 &  0  & 0 & 0 & 0 &   0 \\
 \end{array}
 \right)\!,\; i=1,2,3,
\end{equation}
(here $\{e_1,e_2,e_3\}$ are the canonical vectors of $\k^3$) and
\[
\tilde X_4 =
 \left(\tiny
 \begin{array}{ccc|c|ccc|c}
  0 & 0 & 0 &  0  & * & * & * &   0 \\
  0 & 0 & 0 &  0  & * & * & * &   0 \\
  0 & 0 & 0 &  0  & * & * & * &   0 \\
  \hline
  0 & 0 & 0 &  0  & 1 & * & * &   0 \\
  \hline
  0 & 0 & 0 &  0  & 0 & 0 & 0 &   0 \\
  0 & 0 & 0 &  0  & 0 & 0 & 0 &   0 \\
  0 & 0 & 0 &  0  & 0 & 0 & 0 &   0 \\
  \hline
  0 & 0 & 0 &  0  & 0 & 0 & 0 &   0 \\
 \end{array}
 \right)\!,\;
 \tilde X_5\!=\!
 \left(\tiny
 \begin{array}{ccc|c|ccc|c}
  * & * & * &  0  & * & * & * &   0 \\
  * & * & * &  0  & * & * & * &   0 \\
  * & * & * &  0  & * & * & * &   0 \\
  \hline
  * & * & * &  *  & 0 & * & * &   1 \\
  \hline
  * & * & * &  *  & * & * & * &   0 \\
  * & * & * &  *  & * & * & * &   0 \\
  * & * & * &  *  & * & * & * &   0 \\
  \hline
  * & * & * &  *  & * & * & * &   * \\
 \end{array}
 \right).
\]
Moreover:
\begin{enumerate}
 \item Replacing $X_k$  by $X_k+t_1[X_1,X_4]+t_2[X_2,X_4]+ t_3[X_3, X_4]$, for some appropriate $t_1,t_2,t_3\in\k$, we can assume, for all $k$, that
 \[
  (\tilde X_k)_{1,5}=  (\tilde X_k)_{2,5}=  (\tilde X_k)_{3,5}=0.
 \]
\item Replacing $v_3$ by $v_3+t_1v_2$ and $v_4$ by $v_4+t_2v_2$
for some appropriate $t_1,t_2\in\k$, we can assume, without
changing the properties already obtained, that
 \[
  (\tilde X_4)_{4,6}=  (\tilde X_4)_{4,7}= 0.
 \]

 \item Since $\{[X_1,X_2],[X_1,X_3],[X_2,X_3]\}$ is linearly independent, it is necessary that
 these three 2-coordinates vectors
 \[
  \big( (\tilde X_1)_{4,6},  (\tilde X_1)_{4,7} \big),\quad
  \big( (\tilde X_2)_{4,6},  (\tilde X_2)_{4,7} \big),\quad
  \big( (\tilde X_3)_{4,6},  (\tilde X_3)_{4,7} \big)
 \]
 span a 2-dimensional space.
 We may assume
 that the first two vectors do
 (this may require to permute $B_1$ in order to
 keep property \eqref{eq.e_i}).
 In this case, replacing $v_3$ by
 $(\tilde X_1)_{4,6}v_3+(\tilde X_2)_{4,6}v_4$ and
 $v_4$ by
  $(\tilde X_1)_{4,7}v_3+(\tilde X_1)_{4,7}v_4$,
  we may assume
\[
  \big( (\tilde X_1)_{4,6},  (\tilde X_1)_{4,7} \big)=(1,0)
  \quad\text{and}\quad
  \big( (\tilde X_2)_{4,6},  (\tilde X_2)_{4,7} \big)=(0,1).
 \]

\item
Since  $\tilde X_5$ is nilpotent, the equation
$[[\tilde X_k,\tilde X_4],\tilde X_5]=0$ for $k= 1, 2, 3$
implies that the first three columns
and the 5th row of $\tilde X_5$ are zero.
Similarly the equation
$[[\tilde X_1,\tilde X_2],\tilde X_5]=0$
implies that the 6th and 7th rows of  $\tilde X_5$ are zero.

\item The equation
$[[\tilde X_1,\tilde X_5],\tilde X_1]=0$
implies $(\tilde X_5)_{4,4}=0$ and the
equation
$[[\tilde X_1,\tilde X_5],\tilde X_5]=0$
implies that the last row of $\tilde X_5$ is zero.
\end{enumerate}
At this point we have $[X_4,X_5]=0$, a contradiction, and thus
 $\dim V\ge 9$.
 The argument is similar for $r=4$ but easier.
\end{proof}

\begin{corollary}\label{TeoCotaInferior}
If $(\pi, V)$ is a faithful nilrepresentation of
$\mathcal{L}_{r,2}$ and
$r \geq 6$, then
\[
\dim V\ge \left\lceil \sqrt{2r(r-1)}  \right\rceil + 2.
\]
\end{corollary}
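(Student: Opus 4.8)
The plan is to prove Corollary \ref{TeoCotaInferior} by induction on $r$, using Proposition \ref{teoestruc} to descend from $\mathcal{L}_{r,2}$ to a faithful nilrepresentation of $\mathcal{L}_{r-1,2}$ living on a smaller space. Concretely, fix a faithful nilrepresentation $(\pi,V)$ of $\mathcal{L}_{r,2}$, apply Theorem \ref{teo:descomposicion} to $\mathfrak{n}_2=\pi(\mathfrak{z}(\mathcal{L}_{r,2}))\subset\mathfrak{n}_1=\pi(\mathcal{L}_{r,2})$, and let $n_{1,1}=\dim\mathfrak{n}_{1,1}$ and $n_{2,1}=\dim\mathfrak{n}_{2,1}$. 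By Remark \ref{rmk.dimension} these are the largest among the $\dim\mathfrak{n}_{1,j}$ and $\dim\mathfrak{n}_{2,j}$ respectively. The dimension count $\dim V\ge \dim(\mathfrak{n}_{1,1}v_1)+\#\{v_{j}\}$-type inequalities, together with $\dim\mathfrak{n}_2=r(r-1)/2$ and $\dim\mathfrak{n}_1=r+r(r-1)/2$, relate $\dim V$ to the ``tails'' $D_1:=\dim(\mathfrak{n}_{1,2}\oplus\cdots\oplus\mathfrak{n}_{1,s_1})$ and $D_2:=\dim(\mathfrak{n}_{2,2}\oplus\cdots\oplus\mathfrak{n}_{2,s_2})$; note $D_1=\dim\mathfrak{n}_1-n_{1,1}$ and $D_2=\dim\mathfrak{n}_2-n_{2,1}$.

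First I would dispose of the near-extremal case. Proposition \ref{teoestruc} gives $D_1\le D_2+r-1$, and when equality holds it produces a faithful subrepresentation of $\mathcal{L}_{r-1,2}$ of type $(n_{1,1}-1,1,s_2-1)$ inside $(\pi,V)$, forcing $\dim V\ge 2r-1$; since for $r\ge6$ we have $2r-1 > \lceil\sqrt{2r(r-1)}\rceil+2$ fails in general (indeed $\lceil\sqrt{2r(r-1)}\rceil+2$ grows like $\sqrt2\,r$, so for large $r$ the bound $2r-1$ is \emph{stronger}), this equality case is already fine. So the real work is the strict-inequality case $D_1\le D_2+r-2$, i.e. $\dim\mathfrak{n}_1-n_{1,1}\le \dim\mathfrak{n}_2-n_{2,1}+r-2$, which rearranges to $n_{1,1}\ge n_{2,1}+2$ (using $\dim\mathfrak{n}_1-\dim\mathfrak{n}_2=r$).

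The key inductive step: by Remark \ref{rmk.induction}\eqref{coro ii} applied with $j_0=2$, the Lie algebra $\mathfrak{n}_{1,2}\oplus\cdots\oplus\mathfrak{n}_{1,s_1}$ acts faithfully on $V'=\mathrm{span}_\k(\mathfrak{n}_{1,1}v_1\cup\{v_2,\dots,v_{s_2}\})$, and it contains (by Proposition \ref{prop.basic}, since a complement of $\mathfrak{n}_2$ in it has dimension $r$... wait, dimension $D_1-D_2\le r-2 <r$) a subalgebra $\simeq\mathcal{L}_{r',2}$ with $r'=D_1-D_2\le r-2$. Hmm — more carefully, I would instead keep the full $\mathcal{L}_{r-1,2}$: take $\mathfrak{m}_0$ a linear complement of $\mathfrak{n}_{2,2}\oplus\cdots$ in a \emph{rank-$(r-1)$} piece obtained by discarding one generator direction from $\mathfrak{n}_{1,1}$, giving $\mathfrak{m}\simeq\mathcal{L}_{r-1,2}$ acting faithfully (after checking it is a nilrepresentation — automatic since $\mathfrak{n}_1$ is) on $V'$, and bound $\dim V\ge \dim V'+1\ge (\text{inductive bound for }r-1)+1=\lceil\sqrt{2(r-1)(r-2)}\rceil+2+1$. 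The arithmetic fact $\lceil\sqrt{2(r-1)(r-2)}\rceil+1\ge\lceil\sqrt{2r(r-1)}\rceil$ must then be verified (it holds since $2r(r-1)-2(r-1)(r-2)=4(r-1)$, so $\sqrt{2r(r-1)}-\sqrt{2(r-1)(r-2)}=\frac{4(r-1)}{\sqrt{2r(r-1)}+\sqrt{2(r-1)(r-2)}}<2$ for all $r$, and a short case-check pins down the ceilings).

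The main obstacle I anticipate is \textbf{not} the clean strict-inequality branch above but rather handling \emph{all} intermediate cases $D_1=D_2+r-1-m$ uniformly: when $m$ is small the descent only gives $\mathcal{L}_{r-m+1,2}$ or so, and one must trade the loss of $m-1$ in the rank against a gain in the ``size'' $n_{1,1}$ of the block $\mathfrak{n}_{1,1}v_1\subset V'$. The bookkeeping linking $n_{1,1}$, $s_2$, the induction hypothesis on $\dim V'$, and the target $\lceil\sqrt{2r(r-1)}\rceil+2$ is where the ceilings and the quadratic $n_{1,1}s_2\ge$ (something) interact delicately; I would organize this by proving, via the induction hypothesis applied to the faithful action of $\mathfrak{m}$ on $V'$ and Proposition \ref{prop.type}, the estimate $\dim V'\ge \max\{2(r-m)-1,\ n_{1,1}+s_2-1,\ \lceil\sqrt{2(r-m)(r-m-1)}\rceil+2\}$ together with $(n_{1,1}-1)(s_2-1)\ge(r-m)(r-m-1)/2$, and then checking the single inequality $1+\dim V'\ge\lceil\sqrt{2r(r-1)}\rceil+2$ by a finite-plus-asymptotic argument. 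The base case $r=6$ (where $\mathcal{L}_{5,2}$ on $V'$ is available from Proposition \ref{teoestruc}, giving $\dim V'\ge 9$ hence $\dim V\ge 10=\lceil\sqrt{60}\rceil+2$) anchors the induction.
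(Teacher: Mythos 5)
Your treatment of the equality case is essentially fine (a small slip: for $r\ge 6$ Proposition \ref{teoestruc} only gives $\dim V\ge 2r-2$, not $2r-1$, but $2r-2\ge\lceil\sqrt{2r(r-1)}\rceil+2$ already holds for $r\ge6$, so this is harmless). The genuine gap is in the strict case $D_1\le D_2+r-2$, which you propose to handle by descending to a smaller free algebra acting on $V'$ and inducting on the rank. The algebra that Remark \ref{rmk.induction} actually gives you has rank $D_1-D_2\le r-2$, and when $D_1-D_2$ is small (extreme case $D_1=D_2$, i.e.\ all $r$ generator directions sit inside $\mathfrak{n}_{1,1}$, so $n_{1,1}=n_{2,1}+r$) the descended algebra is tiny or trivial: the induction hypothesis and your inequality $(n_{1,1}-1)(s_2-1)\ge (r-m)(r-m-1)/2$ then carry no information of order $r$, and your listed estimates only yield $\dim V\ge n_{1,1}+s_2\ge r+2$, which is below the target $\lceil\sqrt{2r(r-1)}\rceil+2\approx\sqrt2\,r+2$. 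Moreover, your attempted repair --- ``keep the full $\mathcal{L}_{r-1,2}$ by discarding one generator direction from $\mathfrak{n}_{1,1}$'' --- is not available: Remark \ref{rmk.induction} guarantees invariance of $V'$ and faithfulness only for the tail $\bigoplus_{j\ge 2}\mathfrak{n}_{1,j}$; elements of $\mathfrak{n}_{1,1}$ need not even preserve $V'$, so you cannot enlarge $\mathfrak{m}$ by borrowing directions from $\mathfrak{n}_{1,1}$.

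What is missing is precisely the ingredient that makes the paper's strict case a short direct (non-inductive) count. Consider the evaluation map $\phi(X)=\pi(X)v_1$: its kernel is $\bigoplus_{j\ge2}\mathfrak{n}_{1,j}$, its image is $\mathfrak{n}_{1,1}v_1$, and item \eqref{it.4} of Theorem \ref{teo:descomposicion} gives $\dim V\ge n_{1,1}+s_2$; combined with $D_1\le D_2+r-2$ this yields $\dim V\ge n_{2,1}+s_2+2$ uniformly in your parameter $m$. The decisive quadratic constraint then comes from the center of the \emph{full} algebra, not of the descended one: since $\mathfrak{n}_2=\bigoplus_{j=1}^{s_2}\mathfrak{n}_{2,j}$ with each block of dimension at most $n_{2,1}$ (Remark \ref{rmk.dimension}), one has $s_2\,n_{2,1}\ge r(r-1)/2$, hence $n_{2,1}+s_2\ge\lceil\sqrt{2r(r-1)}\rceil$ and the bound follows in one stroke, with no case analysis over $m$ and no induction on $r$. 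Your phrase about ``the quadratic $n_{1,1}s_2\ge$ (something)'' gestures toward this, but you never identify the correct inequality or its source, and without it the bookkeeping you outline cannot reach $\lceil\sqrt{2r(r-1)}\rceil+2$ in the intermediate and extreme cases.
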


\begin{proof}
Let $v_1 \in V$ as in Theorem \ref{teo:descomposicion} and let
$\phi$ be the linear map
\begin{eqnarray}
\phi : \mathcal{L}_{r,2} \rightarrow V, \quad \phi(X)= \pi(X)(v_1). \nonumber
\end{eqnarray}
It is easy to check that
\begin{eqnarray}
\nonumber \ker  \phi&=& \mathfrak{n}_{1,2} \oplus \dots \oplus \mathfrak{n}_{1,s_1} \\
\nonumber \ker \phi\mid_{\mathfrak{z}(\mathcal{L}_{r,2})}&=& \mathfrak{n}_{2,2} \oplus \dots \oplus \mathfrak{n}_{2,s_2} \\
\label{eq:im} \im \phi&=& \mathfrak{n}_{1,1}v_1
\end{eqnarray}
It follows from Theorem \ref{teo:descomposicion}, part \eqref{it.4} and \eqref{eq:im}
$$
\dim V \geq \dim \im \phi + s_2.
$$
This implies
\[
\dim V + \dim \ker \phi \geq \dim \mathcal{L}_{r,2} + s_2
\]
and hence
\begin{equation}\label{eq.12}
\dim V + \dim(\mathfrak{n}_{1,2} \oplus \dots \oplus \mathfrak{n}_{1,s_1}) \geq  \dim(\mathfrak{n}_{2,1} \oplus\mathfrak{n}_{2,2} \oplus \dots \oplus \mathfrak{n}_{2,s_2}) + r + s_2.
\end{equation}
On the other hand, it follows from Proposition \ref{teoestruc} that
\begin{equation}\label{eq.13}
\dim(\mathfrak{n}_{1,2} \oplus \dots \oplus \mathfrak{n}_{1,s_1})\leq \dim(\mathfrak{n}_{2,2} \oplus \dots \oplus \mathfrak{n}_{2,s_2}) + r-1.
\end{equation}

We now consider two cases.
\begin{enumerate}[(A)]
\item If $\dim(\mathfrak{n}_{1,2} \oplus \dots \oplus \mathfrak{n}_{1,s_1}) < \dim(\mathfrak{n}_{2,2} \oplus \dots \oplus \mathfrak{n}_{2,s_2}) + r-1$, then it follows from \eqref{eq.12} and \eqref{eq.13} that
\begin{equation}\label{eq.11}
  \dim V\geq  \dim\mathfrak{n}_{2,1} + s_2 + 2.
\end{equation}
 Since
 \[
  \pi(\mathfrak{z}(\mathcal{L}_{r,2}))
  =\mathfrak{n}_{2}= \bigoplus_{j=1}^{s_2}\mathfrak{n}_{2,j}
 \]
 and
 $\dim \mathfrak{n}_{2,j}\le \dim \mathfrak{n}_{2,1}$ (see Remark
 \ref{rmk.dimension}) we have
 \[
  s_2 \dim \mathfrak{n}_{2,1} \ge \frac{r(r-1)}{2}
 \]
and hence $\dim \mathfrak{n}_{2,1}+ s_2\ge
\left\lceil 2\sqrt{\frac{r(r-1)}{2}} \right\rceil$.
This, combined with \eqref{eq.11}, implies
\[
\dim V\geq
\left\lceil \sqrt{2r(r-1)}  \right\rceil + 2.
\]

\item If $\dim(\mathfrak{n}_{1,2} \oplus \dots \oplus \mathfrak{n}_{1,s_1}) = \dim(\mathfrak{n}_{2,2} \oplus \dots \oplus \mathfrak{n}_{2,s_2}) + r-1$, from Proposition \ref{teoestruc}
\[
 \dim V\ge 2r-2.
\]
Hence, if $r \geq 6$ we obtain
\[
\dim V  \geq \left\lceil \sqrt{2r(r-1)}  \right\rceil + 2.
\]
\end{enumerate}
This completes the proof.
\end{proof}

%====================================================================================

\section{The Upper Bound for \texorpdfstring{$\mu(\mathcal{L}_{r,2})$}{m(Lr)}}\label{sec:upper}

%====================================================================================

Let $n\in\N$ a fixed natural number. It is not difficult to see that the natural numbers
 $a\ge b$ defined by
 \begin{equation}\label{eq:def_ab}
    a=\left\lceil\sqrt{n}\right\rceil\text{ and }
  b=\begin{cases}
     a-1, &\text{ if $a(a-1)\ge n$;} \\
     a, &\text{ if $a(a-1)< n$;}
    \end{cases}
 \end{equation}
 satisfy
  \begin{equation}\label{eq:condition_ab}
  \begin{split}
  ab &\ge n \\
  a+b& = \left\lceil2\sqrt{n}\right\rceil \\
     &=\min\{c+d:c,d\in \N\text{ and } cd\ge n\}.
 \end{split}
 \end{equation}
 We point out that $a,b$ might not be the only pair satisfying \eqref{eq:condition_ab}, for instance
if $n=26$, then $a,b=6,5$ but $a',b'=7,4$ also work.

\begin{definition}\label{def:squareroot}
Given $n\in\N$, we will say that
\emph{the integer square roots of $n$} are the numbers
$a\ge b$ defined in \eqref{eq:def_ab}.
\end{definition}

This section is devoted to prove the following theorem.

\begin{theorem}\label{teo rep fiel}
Let $r\in\mathbb{N}$ such that $r\ge 2$, and let $a\ge b$ be
the integer square roots of $\binom{r}{2}$.
Then there exists a faithful nilrepresentation
of $\mathcal{L}_{r,2}$ of type $(a,2,b)$. In particular
\[
\mu(\mathcal{L}_{r,2}) \leq a+b+2= \left\lceil \sqrt{2r(r-1)}  \right\rceil + 2.
\]
\end{theorem}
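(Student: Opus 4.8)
The plan is to construct the representation by a dimension count combined with a genericity argument, realizing the claim made in the introduction that a \emph{random} injective map $\pi_0\colon\k^r\to\Hom(\k^2,\k^a)\oplus\Hom(\k^b,\k^2)$ extends to a faithful representation of type $(a,2,b)$. Concretely, I would work with $V=\k^a\oplus\k^2\oplus\k^b$ and parametrize a candidate representation by its restriction to the generators $X_1,\dots,X_r$: each $\pi(X_i)$ is built from a block $P_i\in\Hom(\k^2,\k^a)$ (the $a\times 2$ corner) and a block $Q_i\in\Hom(\k^b,\k^2)$ (the $2\times b$ corner), with the $a\times b$ corner to be determined. Because the $(a,p,b)$ block shape of \eqref{eq.apb_block} forces $\pi(X_i)\pi(X_j)$ to land entirely in the $a\times b$ corner, we automatically have $[\pi(X_i),[\pi(X_j),\pi(X_k)]]=0$; hence by the universal property of $\mathcal{L}_{r,2}$ any choice of the corners $\{P_iQ_j-P_jQ_i\}_{i<j}$ as the commutator images extends uniquely to a Lie homomorphism $\pi\colon\mathcal{L}_{r,2}\to\mathfrak{gl}(V)$ of type $(a,2,b)$. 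Every such $\pi$ is a nilrepresentation since all matrices are strictly block-upper-triangular. So the only thing to arrange is \emph{faithfulness}.

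Faithfulness on the center $\bigwedge^2\k^r$ amounts to the linear independence in $\Hom(\k^b,\k^a)$ of the $\binom r2$ matrices $C_{ij}=P_iQ_j-P_jQ_i$; faithfulness on all of $\mathcal{L}_{r,2}$ additionally needs $\pi(X)=0\Rightarrow X\in\bigwedge^2\k^r$, which for $X=\sum x_iX_i+(\text{center})$ reduces to linear independence of $\{(P_i,Q_i)\}_{i=1}^r$ in $\Hom(\k^2,\k^a)\oplus\Hom(\k^b,\k^2)$ — automatic for a generic (in particular injective) choice of $\pi_0$, and anyway $2a+2b\ge r$ for $r\ge2$. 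The substantive point is therefore: show that for a Zariski-generic choice of $P_1,\dots,P_r,Q_1,\dots,Q_r$ the $\binom r2$ elements $C_{ij}=P_iQ_j-P_jQ_i\in\Hom(\k^b,\k^a)$ are linearly independent. Since $ab\ge\binom r2$ by the choice of the integer square roots, the target has large enough dimension for this to be possible; the task is to prove it is actually achieved generically. I would do this by exhibiting one explicit good configuration: it suffices to find \emph{a single} choice of the $P_i$ and $Q_i$ making the $C_{ij}$ independent, because the non-vanishing of the relevant maximal minor (a polynomial in the entries of the $P_i,Q_i$) is then a nonempty Zariski-open, hence dense, condition.

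The main obstacle — and where I expect the real work to lie — is producing that one witness configuration, because the $C_{ij}$ are constrained: they are the entries of the antisymmetric ``matrix'' $[X_i,X_j]\mapsto P_iQ_j-P_jQ_i$, so they are not free vectors in $\Hom(\k^b,\k^a)\cong\k^{ab}$ but lie in the image of the antisymmetrization of the bilinear map $(\k^{2a})\times(\k^{2b})\to\k^{ab}$, and independence of all $\binom r2$ of them is a codimension-type count that is tight when $ab$ is close to $\binom r2$. I would attack this combinatorially: think of $\k^a$ as indexed by $\{1,\dots,a\}$ and $\k^b$ by $\{1,\dots,b\}$, choose the $P_i$ so that their columns cover the index set $\{1,\dots,a\}$ in a staircase pattern and the $Q_i$ so that their rows cover $\{1,\dots,b\}$ similarly, and arrange that the supports of the $ab$ ``cells'' of $\Hom(\k^b,\k^a)$ are hit by the $C_{ij}$ in a triangular fashion (each new cell activated by exactly one new pair $\{i,j\}$), so that the matrix expressing the $C_{ij}$ in the cell basis is, after reordering, triangular with nonzero diagonal. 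Here one uses that $(a-1)b\le\binom r2\le ab$ and $a\le\lceil\sqrt{\binom r2}\rceil$, i.e. that $a$ and $b$ are balanced, to make the bookkeeping close — the two regimes $a(a-1)\ge\binom r2$ (so $b=a-1$) and $a(a-1)<\binom r2$ (so $b=a$) would be handled separately, and the small cases $r=2,3,4,5$ (where the theorem's value $a+b+2$ must be compared with the known $2r-1$) checked by hand or against \eqref{eq:condition_ab}. Once the witness is in place, genericity closes the argument, the resulting $\pi$ is a faithful nilrepresentation of type $(a,2,b)$, and $\mu(\mathcal{L}_{r,2})\le a+b+2=\lceil\sqrt{2r(r-1)}\rceil+2$ as claimed.
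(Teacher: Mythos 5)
Your reduction is exactly the one the paper uses: block matrices of type $(a,2,b)$ with $\pi(X_i)$ determined by $P_i\in\Hom(\k^2,\k^a)$ and $Q_i\in\Hom(\k^b,\k^2)$, extension via the universal property of $\mathcal{L}_{r,2}$, automatic nilpotency from the strict block triangularity, and faithfulness equivalent to linear independence of the $\binom{r}{2}$ corner matrices $C_{ij}=P_iQ_j-P_jQ_i$ together with independence of the pairs $(P_i,Q_i)$. The genericity remark is also correct: a single witness makes the condition a nonempty Zariski-open one. But your argument stops precisely where the theorem's real content begins. The existence of one configuration with $\{C_{ij}\}$ linearly independent when $ab$ barely exceeds $\binom{r}{2}$ is the hard part --- the paper spends Theorems \ref{thm:faithful_products} and \ref{thm:faithful_Lie} on it --- and your ``staircase / each new cell activated by exactly one new pair'' plan is only a plan: no configuration is exhibited or verified, and the bookkeeping you gesture at is genuinely delicate, because each generator contributes only an $a\times 2$ and a $2\times b$ block (so every $C_{ij}$ has rank at most $4$, and the cells it ``activates'' are coupled across all pairs containing $i$ or $j$), and because of the antisymmetrization, which you correctly name as the obstacle but do not resolve.

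For comparison, the paper's route is: first forget antisymmetry and prove (Theorem \ref{thm:faithful_products}) that one can choose $A_1,\dots,A_n,A'_{i_0}\in M_{a,2}$ and $B_1,\dots,B_n\in M_{2,b}$ so that the triangular family of plain products $A_iB_j$, $j\le i$, is a full basis of $M_{a,b}$, where $ab=\tfrac{n(n+1)}{2}+i_0$; this is done by an induction on $ab$ with four cases ($a=b+1$ versus $a=b$, and the position of $i_0$ relative to $b$ and $n$), in which the inductive witness is transposed, padded with null rows, and must in addition satisfy the auxiliary open condition \eqref{property**} --- so Zariski-openness is exploited inside the induction, not only at the end. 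Then (Theorem \ref{thm:faithful_Lie}) the antisymmetrized commutators are handled by scaling, $X_i=\epsilon^i A_i$, $Y_i=B_i$, so that $X_iY_j-X_jY_i$ is a perturbation of the basis element $A_iB_j$ and independence survives for a suitable $\epsilon\ne 0$. Some device of this kind (or a fully worked-out combinatorial witness covering both regimes $b=a$ and $b=a-1$) is indispensable; as written, your proposal establishes the easy equivalences and the genericity framing but not the existence statement, so the theorem is not yet proved.
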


The main idea to prove the above theorem is to show that,
if $a\ge b$ are the integer square roots of $\binom{r}{2}$,
then any ``generic'' assignment

\begin{equation*}
  \setlength{\unitlength}{11pt}
\begin{picture}(14,10)(2,-1)
\thicklines
\put(2.5,4){$X_i\mapsto $}
\put(18,4){ for $i=1,\dots, r$.}
\put(5.5,0){\line(0,1){8}}
\put(15.5,0){\line(0,1){8}}
\put(15.3,8){\line(1,0){.2}}
\put(5.5,8){\line(1,0){.2}}
\put(5.5,0){\line(1,0){.2}}
\put(15.3,0){\line(1,0){.2}}

\linethickness{0.1mm}
\multiput(11.5,0)(0,.2){40}{\line(0,1){.1}}
\multiput(9.5,0)(0,.2){40}{\line(0,1){.1}}
\multiput(5.5,3)(.2,0){50}{\line(1,0){.1}}
\multiput(5.5,5)(.2,0){50}{\line(1,0){.1}}

\put(13.2,3.7){$*$}
\put(13.2,6.35){$0$}
\put(10.2,6.35){$*$}
\put(7.2,6.35){$ 0 $}
\put(7.2,3.7){$0$}
\put(10.2,3.7){$0$}
\put(7.2,1.2){$ 0 $}
\put(10.2,1.2){$0$}
\put(13.2,1.2){$0$}
\put(11.65,0){$\underbrace{\rule{40pt}{0pt}}_\text{\small{$b$}}$}
\put(9.5,0){$\underbrace{\rule{15pt}{0pt}}_\text{\small{$2$}}$}
\put(5.5,0){$\underbrace{\rule{40pt}{0pt}}_\text{\small{$a$}}$}

\put(15.35,6.25){$\left.\rule{0mm}{7mm}\right\}\text{{$a$}}$}
\put(15.35,3.75){$\left.\rule{0mm}{4mm}\right\}\text{{$2$}}$}
\put(15.35,1.25){$\left.\rule{0mm}{7mm}\right\}\text{{$b$}}$}
\end{picture}
\end{equation*}
has the property that $[X_i,X_j]$, $1\le i<j\le r$ are mapped to
a linearly independent set, and thus it
produces a faithful representation of $\mathcal{L}_{r,2}$.

This lead us to introduce the sets $\mathcal S_{a,b}$ as follows:
given $a, b\in\N$, let
$n$ and $i_0$ be the unique non-negative integers satisfying
\[
 ab=\frac{n(n+1)}{2}+i_0\text{ with $0\le i_0\le n$}.
\]
(we informally say that $n$ and $i_0$ are the triangular representation of $ab$).
Let $\mathcal S_{a,b}\subset M_{a,2}^{n+1}\times M_{2,b}^{n}$ be the set of
 all of sequences of matrices
\begin{equation} \label{eq:seq_matrices_general}
\begin{split}
 A_1,\dots,A_{i_0-1},A_{i_0}, A_{i_0}',
 A_{i_0+1},\dots,A_{n}& \in M_{a,2} \\
 B_1,\dots,B_{n}& \in M_{2,b},
\end{split}
\end{equation}
such that the following products% (they are $ab$ in number)
\begin{equation}\label{eq:matrices_basis_general}
 \begin{matrix}
  A_1B_1  \\
  A_2B_1       & A_2B_2 \\
  \vdots       &  \vdots & \ddots \\
  A_{i_0}B_1   & A_{i_0}B_2 & \dots &  A_{i_0}B_{i_0} \\[1mm]
  A'_{i_0}B_1  & A'_{i_0}B_2 & \dots &  A'_{i_0}B_{i_0} \\[1mm]
  A_{i_0+1}B_1 & A_{i_0+1}B_2 & \dots &  A_{i_0+1}B_{i_0}&  A_{i_0+1}B_{i_0+1} \\
  \vdots       & \vdots       &  \dots & \vdots           &\vdots        & \ddots \\
  A_{n}B_1 & A_{n}B_2 & \dots &  A_{n}B_{i_0} &  A_{n}B_{i_0} & \dots &  A_{n}B_{n}
 \end{matrix}
\end{equation}
constitute a basis of $M_{a,b}$.
The question is whether $\mathcal S_{a,b}$ is not empty.
This is partially answered in the following theorem.

\begin{theorem}\label{thm:faithful_products}
Let $a, b\in\N$ and let
$n$ and $i_0$ be the triangular representation of $ab$.
Assume that $a$ and $b$ satisfy the following conditions:
\begin{enumerate}[(1)]
 \item $a=b$ or $a=b+1$ and
 \item $i_0\le b$ whenever $a=b$,
\end{enumerate}
then  $\mathcal S_{a,b}$ is a
non-empty  Zariski open of $M_{a,2}^{n+1}\times M_{2,b}^{n}$.
\end{theorem}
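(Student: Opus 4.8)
The plan is to prove non-emptiness and Zariski openness simultaneously, the openness being essentially immediate and the non-emptiness being the real content. Openness first: the condition that the $\binom{n+1}{2}+i_0$ products displayed in \eqref{eq:matrices_basis_general} form a basis of $M_{a,b}$ is the non-vanishing of a certain determinant (after fixing identifications $M_{a,b}\simeq\k^{ab}$ and $\binom{n+1}{2}+i_0=ab$), and this determinant is a polynomial in the entries of the $A$'s, $A'_{i_0}$ and $B$'s; hence $\mathcal S_{a,b}$ is Zariski open. So it suffices to exhibit \emph{one} sequence \eqref{eq:seq_matrices_general} for which the products \eqref{eq:matrices_basis_general} are linearly independent, since a non-empty Zariski open set in an irreducible variety is dense and the complement of the zero set of a nonzero polynomial is exactly what we need.

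For the construction, I would proceed by an explicit inductive/block choice exploiting the triangular shape of the array. Think of $M_{a,b}$ as $a\times b$ matrices and of each $A_k\in M_{a,2}$ as a pair of columns in $\k^a$, each $B_k\in M_{2,b}$ as a pair of rows in $\k^b$; then $A_kB_\ell$ is a sum of two rank-one matrices $c\,r^{t}$. The idea is to choose the $B_\ell$ so that the row-spaces $\operatorname{row}(B_\ell)\subset\k^b$ are in "general position" — e.g. successive $2$-planes spanned by consecutive pairs from a generic sequence $r_1,\dots$ in $\k^b$ — and similarly the $A_k$ so that their column-spaces tile $\k^a$ appropriately; the hypotheses $a=b$ or $a=b+1$, together with $i_0\le b$ when $a=b$, are exactly what is needed so that the total count $2n+2$ (roughly) of basic vectors involved can be spread over $a$ rows and $b$ columns without forcing a collision. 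One clean way: first handle the "staircase" case where $ab=\binom{n+1}{2}$ exactly ($i_0=0$ after relabeling, no $A'_{i_0}$), build the basis there by a direct rank computation, and then add the extra $i_0$ products $A'_{i_0}B_1,\dots,A'_{i_0}B_{i_0}$ as a perturbation, using a dimension count to see that a generic choice of $A'_{i_0}$ keeps everything independent provided $i_0$ does not exceed the available "room", which is where condition (2) enters.

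I would organize the argument as: (i) reduce to the "no $A'_{i_0}$" case by noting that if the array with rows $1,\dots,n$ (all $A_k$, no primed row) of total size $\binom{n+1}{2}$ spans a subspace $W\subset M_{a,b}$, then adjoining $i_0$ more vectors $A'_{i_0}B_1,\dots,A'_{i_0}B_{i_0}$ succeeds for generic $A'_{i_0}$ iff $\dim W = \binom{n+1}{2}$ and $i_0\le ab-\binom{n+1}{2}$, the latter being automatic; (ii) in the unprimed case, pick $B_\ell$ with $\operatorname{row}(B_\ell)=\langle f_\ell,f_{\ell+1}\rangle$ for a generic frame $f_1,\dots,f_{?}$ of $\k^b$ (cycling if necessary, which is where $a$ vs.\ $b$ sizes must be compatible) and $A_k$ with columns chosen from a generic frame of $\k^a$, and verify linear independence of the resulting rank-$\le 2$ matrices by peeling off the array row by row from the top, at each stage using that the new row $A_kB_1,\dots,A_kB_k$ lives, modulo the previous rows, in a space whose dimension is controlled by the freshly introduced frame vectors. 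The main obstacle I anticipate is step (ii): making the bookkeeping of which frame vectors are "new" at each step precise enough that the rank count is tight — i.e. verifying that the generic choice really does achieve the dimension $\binom{n+1}{2}$ and not less, with no accidental degeneracy, and this is exactly where the numerical hypotheses $a=b$, $a=b+1$, and $i_0\le b$ have to be used in a non-wasteful way. I would expect to prove the independence by exhibiting a specific (non-generic) choice — e.g. $B_\ell$ and $A_k$ built from standard basis vectors in a carefully chosen staircase pattern — for which independence can be checked by inspection of the support of each product matrix, and then invoke openness to conclude $\mathcal S_{a,b}\neq\emptyset$.
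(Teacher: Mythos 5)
Your openness observation matches the paper, and indeed the whole issue is non-emptiness; but the proposal does not actually establish non-emptiness, and the two places where you defer the work are precisely where the theorem is hard. First, your step (i) ``iff'' is unjustified: for fixed $B_1,\dots,B_{i_0}$ and fixed unprimed span $W$ with $\dim W=\binom{n+1}{2}$, it is not a formal consequence of the dimension count $i_0=ab-\binom{n+1}{2}$ that a generic single matrix $A'_{i_0}$ makes the $i_0$ products $A'_{i_0}B_1,\dots,A'_{i_0}B_{i_0}$ a basis of $M_{a,b}/W$. These $i_0$ matrices all share the same factor $A'_{i_0}$, so their images mod $W$ range over a constrained family as $A'_{i_0}$ varies, and whether that family ever hits a basis of the quotient depends on the interaction between $W$ and the $B_j$'s, not on dimensions alone. (In the paper this is exactly the delicate point: the induction carries along an extra genericity requirement --- that any $b$ of the columns of the previously constructed $\tilde A$'s be linearly independent, available because the previous $\mathcal S_{\tilde a,\tilde b}$ is a non-empty Zariski open set --- and that condition is what makes the ``last rows'' of the products involving the two special matrices independent.) Second, step (ii), the construction of the unprimed staircase basis, is the real content and is only sketched; note also that when $i_0>0$ this unprimed claim is not an instance of the theorem (there $ab>\binom{n+1}{2}$), so it needs its own argument, which you have not supplied.

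By contrast, the paper does not attempt a one-shot explicit construction at all. It argues by induction on $ab$, with a four-way case split on $a-b$ and $i_0$: it passes from $(a,b)$ to $(\tilde a,\tilde b)=(b,a-2)$ or $(b,a-1)$, checks that the new triangular representation $(\tilde n,\tilde i_0)$ again satisfies hypotheses (1)--(2) (this is where $a=b$ or $a=b+1$ and $i_0\le b$ are consumed), and builds the new $A_i,B_i$ by transposing the old $\tilde B$'s and $\tilde A$'s, padding with one or two zero rows, and inserting two special matrices at positions $i_1$ and $i_0$; independence is then checked blockwise (products with vanishing last rows reduce to the inductive basis, the remaining ones are handled by the genericity condition mentioned above). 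The authors explicitly say they would be interested in an explicit representation, which is a strong hint that the ``standard-basis staircase checked by inspection'' you hope for is not known to exist; the rank-$\le 2$ products with shared factors along rows and columns make such a combinatorial design far from routine. So as it stands the proposal has a genuine gap: both the perturbation step and the base construction are asserted rather than proved, and no mechanism (such as the paper's inductive strengthening via Zariski openness) is provided to make the generic-position claims come true for all admissible $(a,b)$.
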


\begin{proof}
First we recall that the condition determining whether the set of matrices in
\eqref{eq:matrices_basis_general} is linearly independent
corresponds to showing that a given determinant is not zero.
Therefore, we only need to show that $\mathcal S_{a,b}\ne \emptyset$.

It is not difficult to see that
$\mathcal S_{a,b}\ne \emptyset$ when $a,b\le 4$.
Therefore, since $a \geq b$, we can assume $a\ge b\ge4$.

We start the proof by pointing out that
\begin{equation}\label{eq:start}
 b<n <2b.
\end{equation}
Indeed, since $ab=\frac{n(n+1)}{2}+i_0$ with $i_0<n+1$, we have
\[
(n+2)(n+1)> 2ab \ge  2b^2>(b+2)(b+1)\quad \text{ (for $b\ge 4$)}
\]
and hence $b<n$.
In addition, since $ab=\frac{n(n+1)}{2}+i_0$ with  $i_0\ge0$, we have
\[
 n(n+1)\le 2ab \le 2(b+1)b<  2(n+1)b
\]
and hence $n<2b$.

We now proceed by induction on $ab$, we must consider different cases.

\medskip

\noindent
\textbf{$\bullet$ Case $a=b+1$ and $2b-i_0<n$:}
Let
$$
\tilde a=b  \text{ and } \tilde b=a-2=b-1.
$$
We want to apply the induction hypothesis to $\tilde a$ and $\tilde b$. Thus we  need
to check properties (1) and (2): since $\tilde a=\tilde b+1$, condition (1) is satisfied and condition (2)
is vacuous.

In order to continue, we need the triangular representation
of $\tilde a\tilde b$.
Since $n$ and $i_0$ are the triangular representation of $ab$ it follows that
\[
 \tilde a \tilde b=b(a-2)
 =\frac{n(n+1)}{2}+i_0-2b
 =\frac{n(n-1)}{2}+n-(2b-i_0).
\]

Let us denote $i_1=2b-i_0$. It follows from  \eqref{eq:start}
$2b>n\ge i_0$, thus
$$
0<i_1;
$$ and in this case we have assumed  $i_1<n$.
Therefore  $\tilde n=n-1$ and $\tilde i_0=n-i_1$
are the triangular representation of  $\tilde a\tilde b$
since we have shown
\[
 0< \tilde i_0\le\tilde n.
\]

We are now in a position to apply the induction hypothesis
to  $\tilde a$ and $\tilde b$ and thus we obtain that
 $\mathcal S_{\tilde a,\tilde b}\ne \emptyset$.
 Therefore, we may choose sequences of matrices
\begin{equation*}
\begin{split}
 \tilde A_1,\dots,\tilde A_{ \tilde i_0-1},\tilde A_{ \tilde i_0}, \tilde A_{ \tilde i_0}',
 \tilde A_{ \tilde i_0+1},\dots,\tilde A_{ \tilde n}& \in M_{ \tilde a,2} \\
 \tilde B_1,\dots,\tilde B_{ \tilde n}& \in M_{2, \tilde b},
\end{split}
\end{equation*}
such that the following products
\begin{equation}\label{eq:young}
 \begin{matrix}
  \tilde A_1\tilde B_1  \\
  \tilde A_2\tilde B_1 & \tilde A_2\tilde B_2 \\
  \vdots &\vdots & \ddots \\
  \tilde A_{\tilde i_0}\tilde B_1 & \tilde A_{\tilde i_0}\tilde B_2 & \dots &  \tilde A_{\tilde i_0}\tilde B_{\tilde i_0} \\[1mm]
  \tilde A'_{ \tilde i_0}\tilde B_1 & \tilde A'_{ \tilde i_0}\tilde B_2 & \dots &  \tilde A'_{ \tilde i_0}\tilde B_{ \tilde i_0} \\[1mm]
  \tilde A_{ \tilde i_0+1}\tilde B_1 & \tilde A_{ \tilde i_0+1}\tilde B_2 & \dots &  \tilde A_{ \tilde i_0+1}\tilde B_{ \tilde i_0}&  \tilde A_{ \tilde i_0+1}\tilde B_{ \tilde i_0+1} \\
  \vdots & \vdots& \dots&\vdots &\vdots & \ddots \\
  \tilde A_{ \tilde n}\tilde B_1 & \tilde A_{ \tilde n}\tilde B_2 & \dots &  \tilde A_{ \tilde n}\tilde B_{ \tilde i_0} &  \tilde A_{ \tilde n}\tilde B_{ \tilde i_0} & \dots &  \tilde A_{ \tilde n}\tilde B_{ \tilde n}
 \end{matrix}
\end{equation}
constitute a basis of $M_{\tilde a,\tilde b}$.
Moreover, since
$\mathcal S_{\tilde a,\tilde b} \subset M_{\tilde a,2}^{\tilde n+1}\times M_{2,\tilde b}^{\tilde n}$ is Zariski open,
we may additionally require that
\begin{equation}\label{property**}
\begin{split}
\text{``any subset of $b$ elements of the set of all the columns of the} \\[-1mm]
\text{matrices $\tilde A_1,\dots,\tilde A_{ \tilde i_0}, \tilde A_{ \tilde i_0}',
 ,\dots,\tilde A_{ \tilde n}$ be linearly independent.''}
\end{split}
\end{equation}

Let
\[
 B_i=
 \begin{cases}
  \tilde A_{n-i}^t, & \text{if $i< i_1$;} \\[2mm]
  (\tilde A'_{\tilde{i}_0})^t, & \text{if $i= i_1$;}\\[2mm]
  \tilde A_{n+1-i}^t, & \text{if $i> i_1$.}
 \end{cases}
\]
Given $X\in M_{q_1-2,q_2}$, then $\widehat X\in M_{q_1,q_2}$ denotes the matrix
$X$ with two null rows added at the bottom. Let
\[
 A_i=
 \begin{cases}
  \widehat{\tilde B_{n-i}^t}, & \text{if $i< i_1$;} \\[2mm]
  \left(\begin{smallmatrix}
                    0 & \dots & 0 & 0 & 0 \\[1mm]
                    0 & \dots & 0 & 1 & 0
                   \end{smallmatrix}\right)^t
                   , & \text{if $i= i_1$;}\\[3mm]
  \widehat{\tilde B_{n+1-i}^t}, & \text{if $i>i_1$.}
 \end{cases}
\]
for $i=1,\dots,n$,
and let
$A'_{i_0}=\left(\begin{smallmatrix}
                    0 & \dots & 0 & 1 & 1 \\[1mm]
                    0 & \dots & 0 & 0 & 0 \\
            \end{smallmatrix}\right)^t \in M_{a,2}$.

Now, among all the products \eqref{eq:matrices_basis_general} we have that
\begin{equation*}
 \begin{matrix}
  A_1B_1  \\
  A_2B_1 & A_2B_2 \\
  \vdots & \vdots & \ddots \\
  A_{i_1-1}B_1 & A_{i_1-1}B_2 & \dots &  A_{i_1-1}B_{i_1-1} \\[1mm]
  A_{i_1+1}B_1 & A_{i_1+1}B_2 & \dots &  A_{i_1+1}B_{i_1-1}&  A_{i_1+1}B_{i_1}&  A_{i_1+1}B_{i_1+1} \\
  \vdots & \vdots& \dots& \vdots& \vdots& \vdots& \ddots \\
  A_{n}B_1 & A_{n}B_2 & \dots &  A_{n}B_{i_1-1} &  A_{n}B_{i_1} &  A_{n}B_{i_1+1} & \dots &  A_{n}B_{n}
 \end{matrix}
\end{equation*}
are linearly independent, as they are the widehat of the transpose
of the products in \eqref{eq:young},
and  each product has
its two last rows equal to zero.

On the other hand, if
$
B_i=\left(\begin{smallmatrix} v_i \\[1mm] w_i  \end{smallmatrix}\right)
$
we know that
the submatrix consisting of the two last rows of each of the following matrices
\begin{equation*}
 \begin{matrix}
  A_{i_1}B_1 & A_{i_1}B_2 & \dots &  A_{i_1}B_{i_1} \\[1mm]
  A'_{i_0}B_1 & A'_{i_0}B_2 & \dots &  A'_{i_0}B_{i_1}  & \dots &  A'_{i_0}B_{i_0}
 \end{matrix}
\end{equation*}
(here we have assumed $i_0\ge i_1$ but it may happen $i_1<i_0$)
are, respectively, equal to
\begin{equation*}
 \begin{matrix}
 \left(\begin{smallmatrix} v_1 \\[2mm] 0  \end{smallmatrix}\right)
   &    \left(\begin{smallmatrix} v_2 \\[2mm] 0  \end{smallmatrix}\right)
   & \dots
   &    \left(\begin{smallmatrix} v_{i_1} \\[1mm] 0  \end{smallmatrix}\right) \\[3mm]
  \left(\begin{smallmatrix} w_1 \\[2mm] w_1  \end{smallmatrix}\right)
   &    \left(\begin{smallmatrix} w_2 \\[2mm] w_2  \end{smallmatrix}\right)
   & \dots
   &    \left(\begin{smallmatrix} w_{i_1} \\[1mm] w_{i_1}  \end{smallmatrix}\right)
   & \dots
   &    \left(\begin{smallmatrix} w_{i_0} \\[1mm] w_{i_0}  \end{smallmatrix}\right)
 \end{matrix}
\end{equation*}
which are linearly independet by \eqref{property**}.
Therefore, the set of products \eqref{eq:matrices_basis_general}
is a basis of $M_{a,b}$ and this completes the induction step in this case.

\medskip

\noindent
\textbf{$\bullet$ Case $a=b+1$ and $2b-i_0\ge n$:}
In this case, from \eqref{eq:start} we obtain
$i_0< b$. Let
$$
\tilde a=b \text{ and }  \tilde{b}=a-1=b.
$$
We want to apply the induction hypothesis to $\tilde a$ and $\tilde b$ and thus we  need
to check properties (1) and (2).
Since $\tilde a=\tilde b$, condition (1) is satisfied and, in order to check  condition (2) we need the triangular representation
of $\tilde a\tilde b$.
Since $n$ and $i_0$ are the triangular representation of $ab$ it follows that
\[
 \tilde a \tilde b=b(a-1)
 =\frac{n(n+1)}{2}+i_0-b
 =\frac{n(n-1)}{2}+n-(b-i_0).
\]
If $i_1=b-i_0$, we obtain
$$
\tilde{n}=n-1 \text{ and } \tilde{i}_0=n-i_1.
$$
Since
\[
 0< \tilde i_0\le\tilde n,
\]
we have $\tilde i_0$ and $\tilde n$ are the triangular representation of  $\tilde a\tilde b$.

Moreover, since in this case  $2b-i_0\ge n$, it follows that
$\tilde i_0\le b=\tilde b$. This shows that condition (2) is also satisfied
and we can apply the induction hypothesis
to  $\tilde a$ and $\tilde b$.

Now the argument is analogous to that of the previous case.
The main difference is that, in this case,
if $X\in M_{q-1,2}$, then $\widehat X\in M_{q,2}$ denotes the matrix
$X$ with one (instead of two) null row added at the bottom.

Since $\emptyset\ne \mathcal S_{\tilde a,\tilde b}\subset M_{\tilde a,2}^{\tilde n+1}\times M_{2,\tilde b}^{\tilde n}$ is Zariski open we may choose sequences of matrices
\begin{equation*}
\begin{split}
 \tilde A_1,\dots,\tilde A_{ \tilde i_0-1},\tilde A_{ \tilde i_0}, \tilde A_{ \tilde i_0}',
 \tilde A_{ \tilde i_0+1},\dots,\tilde A_{ \tilde n}& \in M_{ \tilde a,2} \\
 \tilde B_1,\dots,\tilde B_{ \tilde n}& \in M_{2, \tilde b},
\end{split}
\end{equation*}
such that \eqref{property**} and such that the following products
\eqref{eq:young}
constitute a basis of $M_{\tilde a,\tilde b}$.
Let
\[
 B_i=
 \begin{cases}
  \tilde A_{n-i}^t, & \text{if $i< i_1$;} \\[2mm]
  (\tilde A'_{n-i_1})^t, & \text{if $i= i_1$;}\\[2mm]
  \tilde A_{n+1-i}^t, & \text{if $i> i_1$;}
 \end{cases}
\]
let
$A'_{i_0}=\left(\begin{smallmatrix}
                    0 & \dots & 0  & 1 \\[1mm]
                    0 & \dots & 0  & 0 \\
            \end{smallmatrix}\right)^t \in M_{a,2}$
(note the difference with the previous case)
and, for $i=1,\dots,n$, let
\[
 A_i=
 \begin{cases}
  \widehat{\tilde B_{n-i}^t}, & \text{if $i< i_1$;} \\[2mm]
  \left(\begin{smallmatrix}
                    0 & \dots & 0 & 0  \\[1mm]
                    0 & \dots & 0 & 1
                   \end{smallmatrix}\right)^t
                   , & \text{if $i= i_1$;}\\[3mm]
  \widehat{\tilde B_{n+1-i}^t}, & \text{if $i>i_1$.}
 \end{cases}
\]
Now, among all the products \eqref{eq:matrices_basis_general} we have that
\begin{equation*}
 \begin{matrix}
  A_1B_1  \\
  A_2B_1 & A_2B_2 \\
  \vdots & & \ddots \\
  A_{i_1-1}B_1 & A_{i_1-1}B_2 & \dots &  A_{i_1-1}B_{i_1-1} \\[1mm]
  A_{i_1+1}B_1 & A_{i_1+1}B_2 & \dots &  A_{i_1+1}B_{i_1-1}&  A_{i_1+1}B_{i_0}&  A_{i_1+1}B_{i_0+1} \\
  \vdots & & & & & & \ddots \\
  A_{n}B_1 & A_{n}B_2 & \dots &  A_{n}B_{i_1-1} &  A_{n}B_{i_1} &  A_{n}B_{i_1+1} & \dots &  A_{n}B_{n}
 \end{matrix}
\end{equation*}
are linearly independent (as in the previous case) and each product has
its last (instead of two last) row equal to zero.
Finally, if
$
\left(\begin{smallmatrix} v_i \\[1mm] w_i  \end{smallmatrix}\right)
$
is the submatrix consisting of the two last rows of $B_i$,
we know that  the last row  of  each of the following matrices
\begin{equation*}
 \begin{matrix}
  A_{i_1}B_1 & A_{i_1}B_2 & \dots &  A_{i_1}B_{i_1} \\[1mm]
  A'_{i_0}B_1 & A'_{i_0}B_2 & \dots &  A'_{i_0}B_{i_1}  & \dots &  A'_{i_0}B_{i_0}
 \end{matrix}
\end{equation*}
are, respectively, equal to
\begin{equation*}
 \begin{matrix}
 (v_1) & \dots & (v_{i_1}) \\[1mm]
 (w_1) & \dots & (w_{i_1})& \dots & (w_{i_0} )
 \end{matrix}
\end{equation*}
which are linearly independet by \eqref{property**}.
Therefore, the set of products \eqref{eq:matrices_basis_general}
is a basis of $M_{a,b}$ and this completes the induction step in this case.

\medskip

\noindent
\textbf{$\bullet$ Case $a=b$ and $i_0<b$:} Let
\begin{align*}
\tilde a&=b, \\ \tilde b&=a-1.
\end{align*}
Since $\tilde a=\tilde b+1$, condition (2) is vacuous and
we can apply the induction hypothesis.
As in the previous case, if
\[
 i_1=b-i_0,
\]
then  $\tilde n=n-1$ and $\tilde i_0=n-i_1$
are the triangular representation of  $\tilde a\tilde b$.
Now the argument is the same as in the previous case.

\medskip

\noindent
\textbf{$\bullet$ Case $a=b=i_0$:} Again, let
\begin{align*}
\tilde a&=b, \\ \tilde b&=a-1.
\end{align*}
Since $\tilde a=\tilde b+1$, condition (2) is vacuous and
we can apply the induction hypothesis.

In contrast to the previous case, now  $\tilde n=n$ and $\tilde i_0=0$
are the triangular representation of  $\tilde a\tilde b$.
If we take $i_1=b$, then the argument is the same as in the case
 $a=b+1$ and $2b-i_0\ge n$.
\end{proof}

\begin{theorem}\label{thm:faithful_Lie}
Let $r\in\N$, $r\ge 2$, and let $a\ge b$ be
the integer square roots of $\binom{r}{2}$.
Then there exist sequences of matrices
\begin{align*}
 X_1,\dots,X_{r}& \in M_{a,2} \\
 Y_1,\dots,Y_{r}& \in M_{2,b},
\end{align*}
such that the following $\binom{r}{2}$ matrices
$Z_{i,j}=X_iY_j-X_jY_i$ for $1 \leq j < i \leq r$, are linearly independent in $M_{a,b}$.
\end{theorem}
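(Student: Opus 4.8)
The plan is to deduce Theorem~\ref{thm:faithful_Lie} from Theorem~\ref{thm:faithful_products}: produce the matrices $X_i,Y_i$ by reading off a point of $\mathcal S_{a,b}$ after a suitable re-indexing, and verify linear independence of the $Z_{i,j}$ by a ``peeling'' argument on the staircase basis. First I would reduce to producing a single example: linear independence of the $\binom r2$ matrices $Z_{i,j}$ is the non-vanishing of some maximal minor of the $\binom r2\times ab$ matrix whose rows are their coordinate vectors, and since each $Z_{i,j}$ depends polynomially on $(X_\bullet,Y_\bullet)$ this is a Zariski open condition on $M_{a,2}^{\,r}\times M_{2,b}^{\,r}$; hence it suffices to exhibit one good tuple (which in particular forces $\binom r2\le ab$). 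Next I would check that Theorem~\ref{thm:faithful_products} applies to our $a,b$: writing $a=\lceil\sqrt{\binom r2}\,\rceil$ one gets $\binom r2\le ab<\binom r2+r=\binom{r+1}{2}$ (treating $a=b$ and $a=b+1$ separately), so the triangular representation is $ab=\binom{(r-1)+1}{2}+i_0$ with $i_0=ab-\binom r2\in\{0,\dots,r-1\}$; condition~(1) is the definition of the integer square roots, and when $a=b$ one has $a(a-1)<\binom r2$, so $i_0=a^2-\binom r2<a=b$, which is condition~(2). Thus $\mathcal S_{a,b}$ is a non-empty (irreducible) Zariski open set.

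Now fix a point of $\mathcal S_{a,b}$, write its $r$ matrices of $M_{a,2}$ (in the listed order $A_1,\dots,A_{i_0},A'_{i_0},A_{i_0+1},\dots,A_{r-1}$) as $C_1,\dots,C_r$ and its $r-1$ matrices of $M_{2,b}$ as $B_1,\dots,B_{r-1}$, so that the products $C_kB_\ell$ with $1\le\ell\le\psi(k)$ form a basis of $M_{a,b}$, where $\psi$ is non-decreasing with $\psi(k)=k$ for $k\le i_0$, $\psi(i_0+1)=i_0$, $\psi(k)=k-1$ for $k\ge i_0+2$, so $k-1\le\psi(k)\le k$ and $\psi(r)=r-1$ (when $i_0=0$ one adjoins a generic auxiliary matrix to the $C$'s, and this case is easier). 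Put $X_i:=C_i$ for $1\le i\le r$, $Y_j:=B_j$ for $1\le j\le r-1$, and $Y_r:=0$. Then for $1\le j<i\le r-1$ one has $Z_{i,j}=C_iB_j-C_jB_i$, whose leading term $C_iB_j$ is a basis element (since $j\le i-1\le\psi(i)$) and whose error $C_jB_i$ is an off-staircase product; for $i=r$ one has $Z_{r,j}=C_rB_j$, a basis element with no error. The assignment $(i,j)\mapsto C_iB_j$ is a bijection from $\{(i,j):1\le j<i\le r\}$ onto the set of basis elements $C_kB_\ell$ with $\ell<k$, whose complement in the chosen basis is the set of $i_0$ elements $C_1B_1,\dots,C_{i_0}B_{i_0}$.

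To prove linear independence I would expand every $Z_{i,j}$ in the staircase basis. The $r-1$ rows from the pairs $(r,j)$ are exactly the standard basis vectors $C_rB_j$; using them one clears, from every other row, its coordinates along $C_rB_1,\dots,C_rB_{r-1}$ without disturbing any other coordinate. What is left are the $\binom{r-1}{2}$ rows indexed by $\{(i,j):j<i\le r-1\}$, now lying in the span $M'$ of the remaining $\binom{r-1}{2}+i_0$ basis elements; restricting these rows to the $\binom{r-1}{2}$ columns $\{C_iB_j:j<i\le r-1\}$ gives a matrix $I-E'$, where $E'_{(i,j),(i',j')}$ is the coefficient of $C_{i'}B_{j'}$ in the expansion of $C_jB_i$. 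The theorem reduces to $\det(I-E')\ne 0$.

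This last point is the main obstacle: it amounts to controlling the off-staircase products $C_jB_i$ with $j<i$ in terms of the staircase basis. I expect to handle it by iterating the peeling step (removing one staircase ``row'' at a time), reducing to the non-vanishing of finitely many explicit pivot expressions, each a regular function on the irreducible variety $\mathcal S_{a,b}$; one then shows these are not identically zero by evaluating at a configuration obtained from the explicit transpose-and-pad recursion in the proof of Theorem~\ref{thm:faithful_products}. An alternative is to take the point of $\mathcal S_{a,b}$ in general enough position and exhibit a total order on the pairs $(i,j)$ making $E'$ strictly triangular. Either way one concludes that a generic tuple $(X_\bullet,Y_\bullet)$ makes the $Z_{i,j}$ linearly independent, which is Theorem~\ref{thm:faithful_Lie}; combined with a generic choice also making $k\mapsto(X_k,Y_k)$ injective, this produces the faithful nilrepresentation of type $(a,2,b)$ and the bound $\mu(\mathcal L_{r,2})\le a+b+2$.
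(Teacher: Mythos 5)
Your reduction steps are fine, and your verification that the integer square roots of $\binom{r}{2}$ satisfy the hypotheses of Theorem \ref{thm:faithful_products} with $n=r-1$ and $i_0=ab-\binom{r}{2}$ is a worthwhile check that the paper leaves implicit. However, the proposal has a genuine gap at exactly the point you yourself flag as ``the main obstacle'': you never prove $\det(I-E')\neq 0$, i.e.\ you never control the off-staircase error products $C_jB_i$ ($j<i$) in terms of the staircase basis. Membership in $\mathcal S_{a,b}$ gives no information whatsoever about the coefficients of these error products, so there is no reason a priori that $I-E'$ is invertible at a given (or even generic) point; genericity over the irreducible set $\mathcal S_{a,b}$ is a legitimate framework, but it still requires exhibiting one explicit witness, which is precisely what is missing. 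Neither of your two suggested remedies (iterated peeling with pivot expressions evaluated on the transpose-and-pad recursion, or a total order making $E'$ strictly triangular) is carried out, and neither is obviously available, so as written the argument does not establish the theorem.

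The paper closes exactly this gap with a one-parameter scaling degeneration that your argument lacks. It discards $A'_{i_0}$, renames the remaining $A$'s as $A_2,\dots,A_r$, sets $A_1=0$ and $B_r=0$, and takes $X_i=\epsilon^i A_i$, $Y_i=B_i$. Then $Z_{i,j}=\epsilon^i\bigl(A_iB_j-\epsilon^{j-i}A_jB_i\bigr)$ for $j<i$, so every error term is suppressed by a positive power of $1/\epsilon$ relative to its leading staircase term $A_iB_j$; in the degenerate limit the family becomes the linearly independent set of products $A_iB_j$, $1\le j<i\le r$, guaranteed by Theorem \ref{thm:faithful_products}, and since the relevant minor is a not-identically-zero polynomial condition in the single parameter, some $\epsilon\neq0$ works over the infinite field $\k$. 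In your notation, this choice of weights forces $E'\to 0$, making the determinant condition automatic; some such degeneration (or an explicit witness computation) must be added for your proof to go through.
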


\begin{proof}
 Let $n=r-1$ and let
\begin{equation} \label{eq:seq_matrices}
\begin{split}
 A_1,\dots,A_{i_0-1},A_{i_0}, A_{i_0}',
 A_{i_0+1},\dots,A_{n}& \in M_{a,2} \\
 B_1,\dots,B_{n}& \in M_{2,b},
\end{split}
\end{equation}
be the sequences provided by Theorem \ref{thm:faithful_products}.
We ignore the matrix  $A_{i_0}'$ and we rename
the sequence $A_1,\dots,A_{n}$ as
$A_2,\dots,A_{n+1}$.
It follows from Theorem \ref{thm:faithful_products}
that the set of matrices
\begin{equation}\label{eq:matrices2}
 \begin{matrix}
  A_2B_1  \\
  A_3B_1 & A_3B_2 \\
  \vdots & \vdots & \ddots \\
  A_{n+1}B_1 & A_{n+1}B_2 & \dots &  A_{n+1}B_{n}
 \end{matrix}
\end{equation}
is linearly independent.
Now, we define $A_1=0$ and $B_r=0$ and let
\[
X_i=\epsilon^i A_i \;\text{ and }\; Y_i=B_i\quad \text{ for }i=1,\dots,r,
\]
for some $\epsilon\ne 0$  to be defined later.
For $1 \leq j < i \leq r$, we have
\[
 C_{i,j}=\epsilon^i(A_iB_j-\epsilon^{j-i}A_jB_i).
\]
Since $\epsilon\ne 0$,
the set $\{ C_{i,j},\; 1 \leq j < i \leq r\}$
is linearly independent if and only if
$\{ A_iB_j-\epsilon^{j-i}A_jB_i,\; 1 \leq j < i \leq r\}$
is linearly independent.
Since the set of matrices in \eqref{eq:matrices2}
is linearly independent and linear independence is a non-vanishing polynomial condition on an infinite field, it follows
that there exists $\epsilon\ne0$ such that
$\{ C_{i,j},\; 1 \leq j < i \leq r\}$
is linearly independent.
\end{proof}

\begin{proof}[Proof of Theorem \ref{teo rep fiel}]
Let $a\ge b$ be the integer square roots of $\binom{r}{2}$.
By Theorem \ref{thm:faithful_Lie}, there exist sequences of matrices
\begin{align*}
 X_1,\dots,X_{r}& \in M_{a,2} \\
 Y_1,\dots,Y_{r}& \in M_{2,b}.
\end{align*}
It is easy to check that the following $r$ matrices in $M_{a+b+2,a+b+2}$
\[
\setlength{\unitlength}{11pt}
\begin{picture}(19,8)
\thicklines
\put(5.5,0){\line(0,1){8}}
\put(15.5,0){\line(0,1){8}}
\put(15.3,8){\line(1,0){.2}}
\put(5.5,8){\line(1,0){.2}}
\put(5.5,0){\line(1,0){.2}}
\put(15.3,0){\line(1,0){.2}}

\linethickness{0.1mm}
\multiput(11.5,0)(0,.2){40}{\line(0,1){.1}}
\multiput(9.5,0)(0,.2){40}{\line(0,1){.1}}
\multiput(5.5,3)(.2,0){50}{\line(1,0){.1}}
\multiput(5.5,5)(.2,0){50}{\line(1,0){.1}}

\put(13.2,4){$Y_i$}
\put(13,6.35){$0$}
\put(7.2,1.5){$ 0 $}
\put(10,6.35){$X_i$}
\put(7.2,6.35){$ 0 $}
\put(7.2,4){$0$}
\put(10.2,4){$0$}
\put(10.2,1.5){$0$}
\put(13.5,1.5){$0$}
\put(11.65,0){$\underbrace{\rule{40pt}{0pt}}_\text{\small{$b$}}$}
\put(9.5,0){$\underbrace{\rule{15pt}{0pt}}_\text{\small{$2$}}$}
\put(5.5,0){$\underbrace{\rule{40pt}{0pt}}_\text{\small{$a$}}$}

\put(15.35,6.25){$\left.\rule{0mm}{7mm}\right\}\text{{$a$}}$}
\put(15.35,3.75){$\left.\rule{0mm}{4mm}\right\}\text{{$2$}}$}
\put(15.35,1.25){$\left.\rule{0mm}{7mm}\right\}\text{{$b$}}$}
\end{picture}
\]

\

\noindent
generates a Lie subalgebra isomorphic to $\mathcal{L}_{r,2}$.
\end{proof}

%=============================================================================================

%====================================================================================

\end{document}